\def\d{\mathrm{d}}
\def\laweq{\buildrel \mathrm{d} \over =}
\newcommand{\X}{\mathcal {X}}
\newcommand{\C}{\mathcal {C}}
\newcommand{\T}{\mathcal T}
\newcommand{\VaR}{\mathrm{VaR}}
\newcommand{\RVaR}{\mathrm{RVaR}}
\newcommand{\ES}{\mathrm{ES}}
\newcommand{\E}{\mathbb{E}}
\newcommand{\EE}{\mathcal{E}}
\newcommand{\B}{\mathcal{B}}
\newcommand{\R}{\mathbb{R}}
\newcommand{\p}{\mathbb{P}}
\newcommand{\U}{\mathrm{U}}
\newcommand{\id}{\mathds{1}}
\renewcommand{\(}{\left(}
\renewcommand{\)}{\right)}
\renewcommand{\ge}{\geqslant}
\renewcommand{\le}{\leqslant}
\renewcommand{\leq}{\leqslant}
\renewcommand{\epsilon}{\varepsilon}
\def\lawis{\buildrel \mathrm{d} \over \sim}
\theoremstyle{plain}
\newtheorem{theorem}{Theorem}
\newtheorem{lemma}{Lemma}
\newtheorem{proposition}{Proposition}
\theoremstyle{definition}
\newtheorem{definition}{Definition}
\newtheorem{example}{Example}
\theoremstyle{remark}
\newtheorem{remark}{Remark}
\newcommand{\cet}{\begin{center}}
\newcommand{\ecet}{\end{center}}
\pgfplotsset{compat=1.17}
\begin{document}

\title{Distorted optimal transport}

\author{Haiyan Liu\thanks{Department of Mathematics and Department of Statistics and Probability, Michigan State University, United States. E-mail: \url{liuhaiy1@msu.edu}.} \and  Bin Wang\thanks{Academy of Mathematics and Systems Science, Chinese Academy of Sciences, Beijing 100190, China. E-mail: \url{wangbin@amss.ac.cn}.} \and Ruodu Wang\thanks{Department of Statistics and Actuarial Science, University of Waterloo, Waterloo, ON N2L3G1, Canada. E-mail: \url{wang@uwaterloo.ca}.} \and Sheng Chao Zhuang\thanks{Department of Finance, University of Nebraska-Lincoln, Lincoln, Nebraska, 68588, United States. E-mail: \url{szhuang3@unl.edu}.}}

\maketitle

\begin{abstract}
Classic optimal transport theory is formulated through  minimizing  the expected transport cost   between two given distributions. 
We propose the framework of distorted optimal transport by minimizing a distorted expected cost, which is the cost under a non-linear  expectation.
This new formulation is motivated by concrete problems in decision theory, robust optimization, and risk management,
and it has many distinct features compared to the classic theory.
We choose simple cost functions and study  different distortion functions and their implications on the optimal transport plan. 
We show that on the real line, the comonotonic coupling is optimal for the distorted optimal transport problem when  the distortion function is convex and the cost function is submodular  and monotone.
Some  forms of duality and uniqueness results are provided.  
For inverse-S-shaped distortion functions and linear cost, we obtain the unique form of optimal coupling for all marginal distributions, which turns out to have an interesting ``first comonotonic, then counter-monotonic" dependence structure; for S-shaped distortion functions a similar structure is obtained. Our results highlight several challenges and features in distorted optimal transport, offering a new mathematical bridge between the fields of probability,  decision theory, and risk management.

	\bigskip
	
	\noindent {\bf Keywords:}  comonotonicity, distorted expectation, copulas, Choquet integral, risk aggregation. 
	\end{abstract}

\newpage	

\section{Introduction}
 
  Optimal transport theory, originally developed by Monge and Kantorovich (see \cite{V09} for a history), has been an important field of study over the past several decades, by connecting
 various scientific fields, including probability, economic theory, operations research, statistics, machine learning, and quantitative finance.  For a mathematical background on optimal transport and its applications, we refer to the monographs of \cite{S15} and \cite{V09}.  
A specialized treatment of optimal transport in economics is given by \cite{G16}.

Classic optimal transport theory concerns the minimization of the expected cost  of transporting between two marginal distributions, that is, using a probabilistic formulation, 
 $$
\mbox{to minimize } \E[c(X,Y)] \mbox{ subject to $X\lawis \mu$ and $Y\lawis \nu$,}
 $$
where  $\mu$ and $\nu$ are two probability measures on suitable spaces, $X\lawis \mu$ means that $X$ has the distribution $\mu$ under a fixed probability measure $\p$ with respect to which the expectation $\E$ is evaluated, and $c$ is a real-valued cost function. The joint distribution of $(X,Y)$  is called a transport plan or a coupling in the literature. 

In this paper, we consider a novel setting of 
minimizing \emph{non-linear expectations} (e.g., \cite{P19}) of the transport cost.
 With a non-linear expectation $\EE$ replacing $\E$ in the classic formulation, the corresponding optimal transport problem is
 \begin{equation}\label{eq:R1-1}
\mbox{to minimize } \EE[c(X,Y)] \mbox{ subject to $X\lawis \mu$ and $Y\lawis \nu$.}
 \end{equation}
We will mainly focus on  a popular and tractable class of non-liner expectations,  the 
\emph{distorted expectations}, denoted by $\EE^h$, where $h$ is a \emph{distortion function} (see Section \ref{sec:motivation} for its definition). The \emph{distorted optimal transport} problem is \eqref{eq:R1-1} with $\EE=\EE^h$.
The non-linearity of $\EE^h$ leads to many mathematical challenges that are not treated in the classic theory.

 In addition to its mathematical novelty, 
 we explain in Section \ref{sec:motivation}
 three different motivations for using  non-linear and distorted expectations in optimal transport, which arise from the fields of decision theory (e.g., \cite{Q82, Y87}), robust optimization  (e.g., \cite{GS89, GS10}) and risk management (e.g., \cite{R13, EPRWB14}), respectively.
 In particular, non-linear and distorted expectations are useful tools to model preferences under risk, regulatory risk measures, and   transport cost under uncertainty. Thus, the study of distorted transport brings together ideas  and techniques from these fields.

As the first study on distorted optimal transport, we will focus on the basic setting of transport between two distributions on the real line.
We work mainly with simple forms of cost functions, and analyze different distortion functions and their implications on the optimal transport problem. The optimal couplings will be described by using copulas (\cite{N06}), which are convenient in our setting. 
 As a well-known result in classic optimal transport theory, 
if the cost function on the real plane is submodular, then an optimal transport plan is the  comonotonic coupling. 
Moreover, such a transport plan is \emph{universally optimal}, in the sense that its optimality does not depend on the marginal distributions. 
In the setting of distorted transport, 
we show  that 
if the distortion function is convex and the cost function is submodular and monotone (i.e., increasing component-wise  or decreasing component-wise), then a universally optimal transport plan is the comonotonic coupling (Theorem \ref{th:general}).  
This result highlights two distinct features of distorted optimal transport compared to the classic setting.
First,  convexity of the distortion function $h$ is essential for the optimal transport plan. The classic setting corresponds to a linear $h$, thus both convex and concave. 
Second, monotonicity of the cost function is important  for the optimal transport plan. 
Note that  monotonicity is irrelevant to the classic transport problem, since the optimal coupling is invariant if we add to the cost function some terms that depend only on one of the variables. This is not true in the distorted transport setting, because the distorted expectation is not linear, and additive terms that only depend on one variable generally affect the optimal coupling.  
Moreover, we establish a weak version of duality for convex $h$, but strong duality  holds  only in case of an affine cost  (Theorem \ref{th:dual}). 
These results are discussed in Section \ref{sec:3}. 
 
We proceed to concentrate on the linear cost $c(x,y)=x+y$, which is   submodular, supermodular, and monotone, in Sections \ref{sec:4}-\ref{sec:RA}.
The linear cost represents an additive structure, and it is the standard model in decision making with background risk (see e.g., \cite{Pratt1988}) and risk aggregation problems (e.g., \cite{EPR13}). 
 This cost function is trivial in the classic transport theory, but it yields some clean and nontrivial results in the framework of distorted optimal transport.
With the linear cost, we  show uniqueness of the universally optimal coupling based on strict convexity or concavity of $h$ (Theorem \ref{th:unique}). 
This and some other general results on the linear cost are presented in Section \ref{sec:4}. 
 In decision theory, abundant research has elicited  distortion functions from experimental data of human behavior. 
A special form of distortion functions that is  empirically plausible is the inverse S-shaped  distortion functions (\cite{E54}),  which plays a central role in cumulative prospect theory (\cite{TK92}).\footnote{Kahneman was awarded the Nobel Memorial Prize in Economic Sciences in 2002 ``for having integrated insights from psychological research into economic science, especially concerning human judgment and decision-making under uncertainty."}
We pay special attention to this class of distortion functions in Section \ref{sec:ISS}. 
 Such distortion functions are neither convex nor concave, leading to considerable technical challenges.
In this setting, we obtain equivalent conditions for the existence of {universally optimal} couplings, which turn out 
to have  an interesting ``first comonotonic then counter-monotonic" structure (Theorem \ref{th:2}).   
In contrast to inverse-S-shaped distortion functions in decision theory, 
 S-shaped distortion functions are closely related to   popular risk measures   in risk management. Universally optimal couplings for S-shaped distortion functions (Theorem \ref{pr:th2dual}) and its connection to robust risk aggregation are discussed in Section \ref{sec:RA}.
  
The framework of distorted optimal transport is new and many questions remain unanswered. 
Section \ref{sec:conclusion} concludes the paper with discussions on several open questions and future research directions. 

\section{Non-linear and distorted expectations}

 We first define non-linear and distorted expectations. Fix an atomless  probability space $(\Omega, \mathcal A, \p)$, and let $\X$ be the set of bounded random variables.   
A \emph{non-linear expectation} $\EE$ is a mapping from $\X$ to $\R$ satisfying  
\begin{enumerate}[(i)]
\item Monotonicity: $\EE[X]\le \EE[Y]$ if $X\le Y$;
\item Constant preserving: $\EE[c]=c$ for $c\in \R$;
\item Law invariance: $\EE[X]=\EE[Y]$ if $X$ and $Y$ are identically distributed.
\end{enumerate} 
In \cite{P19}, only the   first two properties are required in the definition. 
In optimal transport problems,  only the distribution of $c(X,Y)$ matters, because the constraints on $X$ and $Y$ are specified by marginal distributions; therefore, it is without loss of generality to 
assume law invariance.\footnote{Even if we use a function $\EE:\X\to \R$ that is not law invariant, the optimization in \eqref{eq:R1-1} 
is equivalent to using a law-invariant  function $\widehat \EE$ given by $\widehat \EE[Z]=\inf\{ \EE[Z']:Z'\mbox{ is identically distributed to }Z\}.$ Hence, requiring law invariance does not lose generality.  
}  
 A \emph{sub-linear expectation} $\EE$  is a non-linear expectation  that further satisfies 
\begin{enumerate}[(i)]
\item[(iv)] Subadditivity: $\EE[X+Y]\le \EE[X]+\EE[Y]$ for all $X,Y\in \X$; 
\item[(v)] Positive homogeneity: $\EE[\lambda X]=\lambda \EE[X] $ for $\lambda \ge 0$. 
\end{enumerate}
Similarly, a \emph{super-linear expectation} $\EE$  is a non-linear expectation  that   satisfies  positive homogeneity and $-\EE$ is subadditive.

We will focus on the popular class of non-linear expectations, the distorted expectations.
A  \emph{distortion function}  is an increasing (in the non-strict sense)    function $h: [0,1] \to [0,1]$ satisfying $h(0)=1-h(1)=0$. 
The set of distortion functions is denoted by $\mathcal H$. 
The \emph{distorted expectation} associated with distortion function $h$ is   $\EE^h: \X\to \R$ defined by
\begin{align}\label{yaari:theory}
	\EE^h[X]= \int_0^{\infty} h(\p(X> x  ))\d x + \int_{-\infty}^0 ( h(\p(X>x ))-1)  \d x.
	\end{align} 
	For an unbounded random variable $X$, a distorted expectation can also be defined via \eqref{yaari:theory}, which may be infinite or undefined. We focus on bounded random variables in this paper to illustrate the main ideas, while keeping in mind that most results also hold on more general spaces, provided that $\EE^h[X]$ is well defined.
	A distorted expectation is a non-linear expectation, and it is sub-linear if and only if $h$ is concave (see Lemma \ref{lem:1p} below). Moreover, any sub-linear expectation on $\X$ can be written as the supremum of sub-linear distorted expectations, as shown by \cite{K01}. 

		Distorted expectations are used in many different fields under different names, and they belong to the general class of Choquet integrals characterized by  \cite{S86, S89}. 
	We refer to the monographs of \cite{D94}, \cite{W10} and \cite{FS16}  for general  theories of the Choquet integral and distorted expectations in economics and finance. 
  
   	In decision theory, the distortion function $h$  
represents a subjective inflation/deflation of the true probability.
If the random variable $X$ is nonnegative, then \eqref{yaari:theory} reduces to
	\begin{align*}
	\EE^h[X]= \int_0^{\infty} h(\p(X>x))\d x.
	\end{align*}
If the distortion function $h$ is left-continuous, through a change of variable, $\EE^h$ can be written as (see e.g., \citet[Lemma 3]{WWW20})
\begin{equation}\label{distor2}
\EE^h[X]=\int_0^1 Q_{1-t}(X) \d h(t), 
\end{equation}
where $$Q_{p}(X) = \inf \{x\in \R: \p(X\le x)\ge p\},~p\in (0,1];$$ i.e., $Q_p(X)$ is the left-quantile of $X$ at $p\in (0,1]$.  
From \eqref{distor2}, it is clear that if $h$ is the identity, then $\EE^h[X]=\E[X]$, which is the expectation with respect to $\p$.   
The formula \eqref{distor2} will be frequently used  in the paper, and it has the natural interpretation that a distorted expectation puts different weights on different quantile levels. 
 The left-quantile $Q_p$ itself can be expressed as a distorted expectation, $Q_p=\EE^h$ where $h(t)= \id_{\{ t>1-p \}}$ for $t\in [0,1]$.
 Another common example of $\EE^h$, popular in finance, is 
the Expected Shortfall (ES)  at level $p\in [0,1)$,  defined as \begin{align}\label{eq:ES-def}
\ES_p(X)=\frac{1}{1-p} \int_p^1 Q_r(X) \d r\mbox{~~~for $X\in \X$.}
\end{align}
It holds that $\ES_p=\EE^h$ where $h(t)= \min\{t/(1-p),1\}$ for $t\in [0,1]$.  
Moreover, ES admits a dual representation (\citet[Theorem 4.52]{FS16}): 
\begin{align}\label{eq:ES}
\ES_p(X) =  \sup\left\{\E^P[X]:P\ll \p,~\frac{\d P}{\d \p}\le \frac{1}{1-p}\right\}\mbox{~~~for all $X\in \X$,}
 \end{align} 
 where $P\ll \p$ means that $P$ is absolutely continuous with respect to $\p$.
 

Next, we    describe three different   motivations for considering distorted expectations in optimal transport from decision theory, robust optimization, and risk management, respectively. 
\label{sec:motivation}
\begin{enumerate}
\item \textbf{Behavioral decision making.} Distorted expectations are most commonly used in decision theory, where decision makers maximize distorted expected utility of their random payoffs. The use of distorted expectations was developed by \cite{Q82}, \cite{Y87}, \cite{S89} and \cite{TK92} to address paradoxes that the classic expected utility theory (EUT) fails to accommodate. By now, theories based on distorted expectation, in particular, the rank-dependent utility theory (\cite{Q82,Y87}) and the cumulative prospect theory (\cite{TK92}),  are the standard alternatives to EUT in decision making under risk; see \cite{W10} for a general treatment.  The portfolio selection problem with  cumulative prospect theory is studied by \cite{HZ11}. 

To give a  concrete example, consider   a decision maker whose preference is
modelled by rank-dependent utility theory; that is, her welfare of a random payoff $Z$ is evaluated by $  \EE^h[u(Z)]$, where $\EE^h$ is a distorted expectation and $u$ is a utility function. 
Suppose that the decision maker can choose assignment between two known distributions $\mu$ and $\nu$,  for instance, pairing suppliers $X\lawis \mu$ and consumers  $Y\lawis \nu$ of a certain commodity. The welfare of the decision maker for the assignment is $\EE^h[u(f(X,Y))]$, where  $f(x,y)$ represents the profit of matching supplier $x$ with consumer $y$.
The use of the rank-dependent utility in this context means that the decision maker puts different weights on large and small profits (for instance, she may be more concerned about pairs with small profit, for a fairness consideration). 
 Therefore, the optimization problem of the decision maker is 
 $
\mbox{to maximize} ~  \EE^h[ u(f(X,Y))]$ subject to $ X\lawis \mu$ and $Y\lawis \nu,
 $
 which is a distorted transport problem.

%
 
\item \textbf{Robust optimization.} 
As a useful property of  distorted expectations, any convex (resp,~concave) distorted expectation can be written as the supremum (resp.~infimum)
  over a class of usual expectations under different probability measures (see Section \ref{sec:2} for details).
Taking the convex case as an example, the distorted transport problem is equivalent to solving for the optimal transport plan 
under the worst-case scenario within a class of probability measures.
This approach can be traced back  to Wald's minimax criterion (\cite{Wald1950}) and was axiomatized by \cite{GS89} in decision theory.  
This idea also aligns well with the idea of distributionally robust optimization,   studied by e.g.,
\cite{GS10}, \cite{WKS14} and \cite{BM19}. 

Consider a classic transport problem to minimize $\E[c(X,Y)]$ subject to $X\lawis \mu$ and $Y\lawis \nu$.
Suppose that the decision maker is  uncertain about the probability measure $\p$ she used to assess the transport cost (for instance, $\p$ may be estimated from data). Instead, she considers the robust objective
$$
\mbox{to minimize~}\sup_{P \in \mathcal Q} \E^P[c(X,Y)],
$$
where $\mathcal Q$ is a class of probability measures not far away from $\p$, called the uncertainty set. 
For many choices of $\mathcal Q$, the robust optimization problem can be converted into a distorted optimal transport problem.
As a concrete example, consider the uncertainty set $\mathcal Q=\{P\ll \p: \d P/\d \p \le 1/(1-\beta) \}$ where $\beta \in [0,1)$, that is, the set of all probability measures with a likelihood ratio with respect to $\p$  bounded by $1/(1-\beta)$, with $\beta=0$ corresponding to $\mathcal Q=\{\p\}$, the case of no uncertainty. By \eqref{eq:ES}, 
$$
\sup_{P\in \mathcal Q} \E^P[Z] = \ES_\beta (Z) \mbox{~for all random variables $Z$,}
$$ 
and hence, we arrive at a distorted transport problem. 
In general, any $\EE:Z\mapsto \sup_{P\in \mathcal Q} \E^P[Z]$ is a sub-linear expectation, provided that it satisfies law invariance.

\item \textbf{Robust risk aggregation.}  
In financial risk management, distorted expectations are known as distortion risk measures; see \cite{MFE15}. 
A robust risk aggregation problem typically concerns
$$
\sup\{\EE^h(X_1+\dots+X_n): X_1\lawis \mu_1,\dots,X_n\lawis \mu_n\},
$$
where $\mu_1,\dots,\mu_n$ are $n$ probability measures on $\R$. For robust risk aggregation problems in risk management,  see the survey of \cite{EPRWB14} and the monograph \cite{R13}. 
A robust risk aggregation problem for $\EE^h$ corresponds to a multi-marginal distorted transport problem,
and we focus on $n=2$ in this paper. 
Robust risk aggregation problems are motivated by the fact that the dependence structure of multiple risks is typically difficult to accurately specify. This  type of uncertainty is referred to {\it dependence uncertainty}, as studied by    \cite{EPR13, EWW15} and \cite{BLLW20}. 
Distorted expectations $\EE^h$ arise in this context because the standard  regulatory risk measures used in banking and insurance, such as Basel IV and Solvency II,
are the Value-at-Risk (VaR) and the ES, both of which are special cases of distorted expectations. Moreover, distortion risk measures are popular  in the literature of risk management. 
In the special case that $h$ is concave, robust risk aggregation can be   reformulated as a  multi-marginal classic optimal transport problem, as studied by \cite{EMNP22}.

\end{enumerate}

In the above settings,   distorted expectations $\EE^h$ appear 
for three different reasons: as preference models in the first setting,
as a result of statistical uncertainty in the second,
and as externally specified regulatory risk measures in the third. 
Moreover, although both the second and the third settings deal with uncertainty, their mathematical structures and interpretations are quite different. 
In robust optimization, uncertainty is reflected by the probabilities representing $\EE^h$, and there is no uncertainty associated with the transport plan;  in robust risk aggregation, uncertainty is reflected by the transport plan representing dependence among risks, and there is no uncertainty associated with the risk measure $\EE^h$.   

\section{The  framework and some preliminaries}  
\label{sec:2} 
We describe our main setting in this section.
Throughout, 
equalities and inequalities between random variables hold in the almost sure sense. 
For real numbers $u$ and $v$, we write $u \wedge v =\min\{u,v\}$,
$u \vee v =\max\{u,v\}$ and $u_+=\max\{u,0\}$.

\subsection{Non-linear and distorted optimal transport}

We first briefly explain the classic Monge-Kantorovich optimal transport problem, and then we focus on non-linear and distorted optimal transport on the real line. 

   Denote by $\Pi( \mathfrak X)$ the set of all Borel probability measures on a Polish space $\mathfrak X$ equipped with the Borel $\sigma$-field $\B( \mathfrak X)$. Consider Polish spaces $\mathfrak X , \mathfrak Y$, and probability measures $\mu\in \Pi(\mathfrak X) $ and $\nu\in\Pi ( \mathfrak Y)$.  
 We will always equip $\mathfrak X\times \mathfrak Y$ with the product $\sigma$-field. 
 Given a function $c:\mathfrak X\times \mathfrak Y\to \R$, which models the transportation cost, the classic optimal transport problem raised by Monge asks for the minimum of the expected transport cost
$ \int_{\mathfrak X} c(x,T(x))\mu(\d x)$ 
   for $T$ over the set $\T(\mu,\nu)$   of transport maps from $\mu$ to $\nu$,  i.e., measurable functions $T:\mathfrak X\to \mathfrak Y$ such that $\mu\circ T^{-1}=\nu$. 
  The   Kantorovich problem is a relaxation of Monge's problem, which solves for the minimum of 
    \begin{align}
     \int_{\mathfrak X\times  \mathfrak Y}c(x,y)\pi(\d x,\d y)
\label{eq:Kanto}     \end{align} 
    for $\pi$ over the set $\Pi(\mu,\nu)$ that is the set of transport plans from $\mu$ to $\nu$,  i.e.,  
      distributions on $\mathfrak X\times  \mathfrak Y$ with marginal $\mu$ on $\mathfrak X$ and marginal $\nu $ on $\mathfrak Y$.
Expressing the cost using random variables, the problem of \eqref{eq:Kanto} can be written as
    \begin{align} 
\mbox{to minimize~~}\E [c(X,Y)] \mbox{~~subject to $X\lawis \mu$ and $Y\lawis \nu$}.
\label{eq:prob}     \end{align}   
 Replacing $\E$ in \eqref{eq:prob} by a non-linear expectation $\EE$ or  $\EE^h$, we   formulate the problem of \emph{non-linear optimal transport} mentioned in the introduction:
   \begin{align} 
\mbox{to minimize~~}\EE [c(X,Y)] \mbox{~~subject to $X\lawis \mu$ and $Y\lawis \nu$}, 
\label{eq:R1-nonlinear}     \end{align} 
as well as the special case of  \emph{distorted optimal transport}  :
     \begin{align} 
\mbox{to minimize~~}\EE^h [c(X,Y)] \mbox{~~subject to $X\lawis \mu$ and $Y\lawis \nu$}.
\label{eq:distorted}     \end{align}  

 
 The problems  \eqref{eq:R1-nonlinear} and \eqref{eq:distorted}   are technically quite different from \eqref{eq:prob}.
In case $c$ is a linear combination of separable terms,  that is, $c(x+y)=f(x)+g(y)$ for some functions $f:\mathfrak X\to \R$ and $g:\mathfrak Y\to \R$, the problem \eqref{eq:prob} is trivial since $\E[c(X,Y)]=\E[f(X)]+\E[g(Y)] $, which does not depend on the dependence structure of $(X,Y)$ and thus  any transport plan is optimal. 
However,  even with a linear cost $c$, the problems  \eqref{eq:R1-nonlinear} and \eqref{eq:distorted}  are  highly nontrivial because the non-linear expectation is not an additive operator. 
In the case of additive cost,  we may, without loss of generality, replace  $f(X)$ and $g(Y)$  by  $X$ and $Y$, respectively, and consider the problem
    \begin{align} 
\mbox{to minimize~~}\EE [X+Y] \mbox{~~subject to $X\lawis \mu$ and $Y\lawis \nu$},
\label{eq:main}    \end{align}  
where $\mu$ and $\nu$ are probability measures on $\R$. 
 From now on, we will assume $\mathfrak X=\mathfrak Y=\R$.
 
In some contexts, such as robust risk aggregation (see Section \ref{sec:RA}),
 the maximization of $\EE[c(X,Y)]$ is relevant, which represents the worst-case risk under unknown dependence structure.  
The   maximization problem  of $\EE[c(X,Y)]$ 
can be converted to a minimization problem by taking the cost $-c$ and a different non-linear expectation (see  Lemma \ref{lem:2} below for the explicit case of $\EE^h$). Therefore, we can, without loss of generality, only study the minimization problems \eqref{eq:R1-nonlinear}-\eqref{eq:main}.

\subsection{Copulas}
To address problem \eqref{eq:prob} in the classic setting and our  problems \eqref{eq:R1-nonlinear}-\eqref{eq:main}, 
a convenient tool is the concept of copulas.
 For general references on copulas in risk management,  see \cite{N06} and \cite{MFE15}.  

  Let ${\mathcal I}=[0,1]$, and thus ${\mathcal I}^2$ is the unit square  ${\mathcal I}\times {\mathcal I}.$
A \emph{$2$-copula} (or briefly, a
copula) is a distribution function on $\mathcal I^2 $ with $\mathrm{U}[0,1]$ marginals.
Equivalently,  $C:{\mathcal I}^2 \to {\mathcal I}$ is a function with the following
properties:
For every $u,v \in {\mathcal I}$, 
 $C(u,0)=0=C(0,v)$, 
$C(u,1)=u$ and $C(1,v)=v$, and  
for every $u_1,u_2,v_1,v_2 \in \mathcal I$ with $u_1\le u_2$ and $v_1\le v_2$,
$C(u_2,v_2)-C(u_2,v_1)-C(u_1,v_2)+C(u_1,v_1)\ge 0.$ 
Throughout the paper, we denote  by $\mathcal C$ the set of all $2$-copulas.  

 Sklar’s theorem states that 
 for any joint distribution function $H$ 
with margins $F$ and $G$,  there exists a copula $C$ such that 
\begin{align}\label{eq:sk}
H(x,y) = C(F(x),G(y)) \mbox{ for all $x,y\in \R$.}
\end{align}
If $F$ and $G$ are continuous, then $C$ is unique. 
The copula $C$ in \eqref{eq:sk} is called a copula of $(X,Y)\lawis H$, 
and it is denoted by
  $C_{X,Y}$ if it is unique.

Transport plans, including transport maps, can be represented by copulas; for this reason, transport plans are also called   couplings in the literature. 
The optimization problem \eqref{eq:R1-nonlinear} can be equivalently formulated via copulas  
   \begin{align} 
\mbox{to minimize~~}\EE [c(F^{-1}_\mu (U), F^{-1}_\nu(V)) ] \mbox{~~subject to $ C\in \mathcal C$ where $(U,V)\lawis C$},
\label{eq:copula} 
   \end{align} 
   where  $F_\mu$ is the distribution function of $\mu$ with the corresponding left-quantile function $F^{-1}_\mu$, that is,
       $F_\mu(x)=\mu((-\infty,x])$ for $x\in \R$ and  
     $F^{-1}_\mu (t) = Q_t(X)$ for $t\in (0,1]$ with $X\lawis \mu$.

Two important copulas
that we will frequently encounter are    
the comonotonic copula (or the Fr\'echet-Hoeffding upper bound) $$C^+(u,v)=u\wedge v~\mbox{ for }~u,v\in [0,1],$$
and 
the  counter-monotonic copula (or the Fr\'echet-Hoeffding lower bound) 
$$C^-(u,v)=(u+v-1)_+~\mbox{ for }~u,v\in [0,1].$$

Two random variables $X$ and $Y$ are \emph{comonotonic}  if there exist increasing functions $f$ and $g$ such that $X=f(X+Y)$ and $Y=g(X+Y)$; they are \emph{counter-monotonic} if $X,-Y$ are comonotonic. 
It is well known that 
two random variables are comonotonic if and only if one of their copulas is $C^+$,
and two random variables are counter-monotonic if and only if one of their copulas is $C^-$.
Moreover, 
 $C^- \le C \le C^+$  for all $C\in \mathcal C$. This is known as the Fr\'echet-Hoeffding bounds; see \citet[Theorem 2.2.3]{N06}.
 The transport maps represented by $C^-$ and $C^+$ are presented in Figure \ref{fig:cc}.  
  
\begin{figure}[t]
\begin{center}
\begin{subfigure}[b]{0.4\textwidth}
\centering
 \begin{tikzpicture}
  \draw[-,thick] (3, 0) -- (7, 0);
  \draw[-,thick] (2.2, 2) -- (5.7, 2) ;
 \draw[->] (3.099077, 2) -- (3.725902, 0) ;
  \draw[->] ( 3.539025, 2) -- (4.348083, 0) ;
   \draw[->] (3.861236, 2) -- (4.803758, 0) ;
    \draw[->] (4.138764, 2) -- (5.196242, 0) ;
        \draw[->] (4.460975, 2) -- (5.651917, 0) ;
       \draw[->] (4.900923, 2) -- (6.274098, 0) ;
         
   \draw[domain=2.2:5.7, smooth, thick,variable=\x]  plot ( {\x},{exp(-(\x-4)*(\x-4)/(2*0.3))/sqrt(2*pi*0.3)+2.1}); 
  \draw[domain=3:7, smooth, thick,variable=\x]  plot ( {\x},{-exp(-(\x-5)*(\x-5)/(2*0.6))/sqrt(2*pi*0.6)-0.1}); 
   \filldraw (3.099077, 2) circle (1.5pt);
        \filldraw ( 3.539025, 2) circle (1.5pt);
           \filldraw(3.861236, 2)circle (1.5pt);
              \filldraw(4.138764, 2) circle (1.5pt);
              \filldraw (4.460975, 2) circle (1.5pt);
              \filldraw (4.900923, 2) circle (1.5pt);  
\end{tikzpicture}
\end{subfigure} 
\hspace{2em}
\begin{subfigure}[b]{0.4\textwidth}
\centering
\begin{tikzpicture}
  \draw[-,thick] (3, 0) -- (7, 0);
  \draw[-,thick] (2.2, 2) -- (5.7, 2) ;
 \draw[->] (3.099077, 2) -- (6.274098, 0) ;
  \draw[->] ( 3.539025, 2) -- (5.651917, 0) ;
   \draw[->] (3.861236, 2) -- (5.196242, 0) ;
    \draw[->] (4.138764, 2) -- (4.803758, 0) ;
        \draw[->] (4.460975, 2) -- (4.348083, 0) ;
       \draw[->] (4.900923, 2) -- (3.725902, 0) ;
         
   \draw[domain=2.2:5.7, smooth, thick,variable=\x]  plot ( {\x},{exp(-(\x-4)*(\x-4)/(2*0.3))/sqrt(2*pi*0.3)+2.1}); 
  \draw[domain=3:7, smooth, thick,variable=\x]  plot ( {\x},{-exp(-(\x-5)*(\x-5)/(2*0.6))/sqrt(2*pi*0.6)-0.1}); 
   \filldraw (3.099077, 2) circle (1.5pt);
        \filldraw ( 3.539025, 2) circle (1.5pt);
           \filldraw(3.861236, 2)circle (1.5pt);
              \filldraw(4.138764, 2) circle (1.5pt);
              \filldraw (4.460975, 2) circle (1.5pt);
              \filldraw (4.900923, 2) circle (1.5pt);  
\end{tikzpicture}
\end{subfigure} \end{center}
\caption{The transport maps between two normal distributions described by the copulas $C^+$ (left) and $C^-$ (right)}
  \label{fig:cc}
  \end{figure}
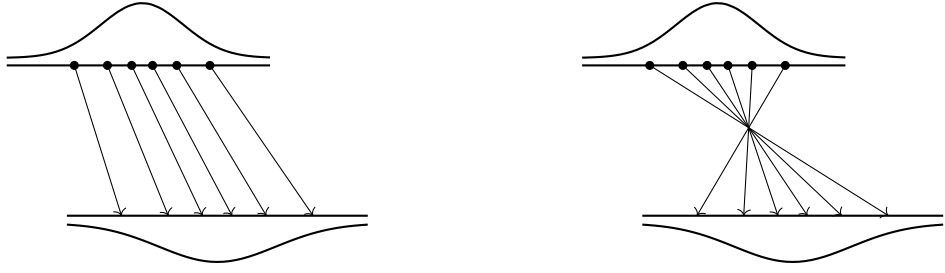

Another class of copulas that we will use is 
the class of ordinal sum copulas (\citet[Section 3.2.2]{N06}), indexed by $p\in (0,1)$, defined by 
  \begin{align}\label{cp}
\mathcal C_p = \big\{C\in \mathcal C: C(1-p,1-p)=1-p\big\}.
\end{align}   
One can verify  $\bigcap_{p\in (0,1)} \mathcal C_p=\{C^+\}$ (e.g., \citet[Theorem 4]{WZ21}) and $C^-\not\in \bigcup_{p\in (0,1)} \mathcal C_p$.
 
The copulas  $C^+$ and $C^-$ play special roles in classic transport problems \eqref{eq:prob}, equivalent to \eqref{eq:distorted} with $h$ being the identity.
A function $c:\R^d \to \R$ is \emph{submodular} if
 $$ 
c(\mathbf u \land \mathbf v) + c(\mathbf u \lor \mathbf v) \le  c(\mathbf u) +c(\mathbf v),\ \text{for
all } \mathbf u,\mathbf v \in \R^d,
 $$ 
where $\mathbf u \land \mathbf v$ is the component-wise
minimum of $\mathbf u$ and $\mathbf v$, and $\mathbf u \lor \mathbf v$ is the component-wise maximum of $\mathbf u$ and $\mathbf v$. A function $c$ is \emph{supermodular} if $-c$ is submodular.  
A simple example of a submodular cost function is $c(x,y)=|x-y|^p$ for $p\ge 1$.
For the classic transport problem, it is well known (see e.g.,  \cite{PW15}) that  if $c$ is submodular, then $C^+$ minimizes \eqref{eq:prob},
and if $c$ is a supermodular function, then $C^-$ minimizes \eqref{eq:prob}.
The class $\mathcal C_p$ solves a special case of \eqref{eq:distorted}, which we will see below.

With given $\EE$ and $c$, optimality of a copula $C$ for the problem \eqref{eq:copula} 
may depend on the choice of $\mu$ and $\nu$. 
However, in some interesting cases, the choice of $\mu$ and $\nu$ is irrelevant for optimality. 
For instance, as we see above, for the identity $h$ and submodular $c$, 
$C^+$ is optimal regardless of $\mu$ and $\nu$. 
This leads to our definition of universal optimality. 
In what follows, $X\laweq X'$ means that $X$ and $X'$ have the same law. 

\begin{definition}[Universal optimality]
	Let   $\EE$ be a non-linear expectation and $c:\R^2\to \R$. 
	The copula  $C $ is \emph{universally optimal} for $(c, \EE)$ if
	for all $X,Y\in \mathcal X$ with copula $C$, 
	\begin{align}\label{eq:1}
	\EE  [c(X,Y )] \le \EE  [c(X',Y')] \mbox{~~~for all $X',Y'\in \mathcal X$  satisfying $X\laweq X'$ and $Y\laweq Y'$}.	\end{align}
	If \eqref{eq:1} holds with $\EE=\EE^h$ for    $h\in \mathcal H$, we say that the copula $C $ is  {universally optimal} for $(c, h)$.
	\end{definition}
 We sometimes drop ``universally" in the text (but never in results) when the context is clear. 
	We also say that a random vector $(X,Y)$ is optimal if \eqref{eq:1} holds.
	In the special case that $c(x,y)=x+y$ which we focus on in later sections, we omit $c$ and say that 
	the copula  $C $ is  {universally optimal} for $\EE $ (or for $h$).  	
	As in the classic optimal transport literature, the term ``optimality" refers to  minimization. 
	We will replace ``optimal" with ``maximal" if  ``$\le$" in \eqref{eq:1} is replaced with ``$\ge$".
	Maximality can be useful when computing the worst-case dependence structure in risk aggregation problems, treated in Section \ref{sec:RA}.
	

	Universal optimality allows for the optimization problem \eqref{eq:copula} to be computed independent of the marginal distributions $\mu$ and $\nu$. This is arguably a quite strong condition, and universal optimizers may not exist. 
	However, universal optimizers do exist in some special and important cases. 
	For instance, as we see above, comonotonicity is universally optimal for {$\EE=\E$} and $c$ being submodular.  
For an example outside classic transport problems, 
take   $\EE^h (X) = -\ES_p(-X)$ for $X\in \X$ and $p\in (0,1)$; equivalently, $\EE^{\tilde h}=\ES_p$ (see Lemma \ref{lem:2}).
The recent result of \citet[Theorem 5]{WZ21} on risk aggregation of ES implies that
$C$ is universally optimal  for  $h $ if and only if $C\in \mathcal C_{1-p}$; in that result,
 the worst-case risk aggregation  problem is studied by maximizing $\ES_p$, which corresponds to minimizing  $\EE^h$.


\subsection{Properties of distorted expectations}
\label{sec:distortion}

We present some well-known properties of distorted expectations in the following three lemmas, 
which will be useful in our analysis. 
\begin{lemma}\label{lem:1}
For $h\in \mathcal H$, $\EE^h$ is additive for comonotonic random variables.
\end{lemma}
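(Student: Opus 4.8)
The plan is to reduce the claim to the additivity of quantile functions under comonotonicity, and then to read off additivity of $\EE^h$ from its quantile representation \eqref{distor2}, in which $\EE^h$ is \emph{linear} in the integrand $Q_{1-t}(\cdot)$.

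First I would record the key structural fact that if $X$ and $Y$ are comonotonic, then $Q_p(X+Y)=Q_p(X)+Q_p(Y)$ for every $p\in(0,1)$. Writing $X\lawis\mu$ and $Y\lawis\nu$, comonotonicity (equivalently, copula $C^+$) means that $(X,Y)\laweq(F^{-1}_\mu(U),F^{-1}_\nu(U))$ for a single $U\lawis \mathrm{U}[0,1]$. Since the left-quantile functions $F^{-1}_\mu$ and $F^{-1}_\nu$ are both increasing and left-continuous, so is their sum $G:=F^{-1}_\mu+F^{-1}_\nu$; and any increasing left-continuous $G$ is exactly the left-quantile function of $G(U)$. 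As $X+Y\laweq G(U)$ and $Q_p$ is law-invariant, this gives $Q_p(X+Y)=G(p)=Q_p(X)+Q_p(Y)$.

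For left-continuous $h$ the lemma then follows at once from \eqref{distor2}:
$$\EE^h[X+Y]=\int_0^1 Q_{1-t}(X+Y)\,\d h(t)=\int_0^1\big(Q_{1-t}(X)+Q_{1-t}(Y)\big)\,\d h(t)=\EE^h[X]+\EE^h[Y],$$
where the middle equality is the quantile additivity just established and the last equality is linearity of the Lebesgue--Stieltjes integral in its integrand.

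The point that requires genuine care, and the main obstacle, is that \eqref{distor2} was stated only for left-continuous $h$, whereas the lemma is asserted for all $h\in\mathcal H$. I would close this gap by arguing directly from the defining formula \eqref{yaari:theory}, using the comonotonic coupling to split each of the two improper integrals additively, rather than by naively modifying $h$ at its (at most countably many) discontinuities: the latter is delicate because $\EE^h[X]$ genuinely depends on the value of $h$ at a level $p$ whenever the survival function $x\mapsto\p(X>x)$ is flat at height $p$, i.e.\ whenever $\mu$ leaves a gap in its support. I expect the computation from \eqref{yaari:theory} to be routine once the quantile-additivity identity above is in hand, so the conceptual content of the proof is entirely contained in the first two paragraphs.
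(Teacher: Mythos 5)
Your argument is correct in its core and takes a genuinely different route from the paper: the paper offers no proof at all, only a one-line appeal to the general theory of Choquet integrals (\cite{S86}), whereas you give a self-contained derivation from two elementary facts --- comonotonic additivity of left quantiles (which you prove correctly via the common-uniform representation $(X,Y)\laweq(F_\mu^{-1}(U),F_\nu^{-1}(U))$ and the observation that $G=F_\mu^{-1}+F_\nu^{-1}$ is its own left-quantile function) and linearity of the representation \eqref{distor2} in the integrand. The trade-off is clear: your route exposes exactly which property of $\EE^h$ drives the lemma and stays entirely inside the paper's toolkit, while the paper's citation buys, at no cost, coverage of every $h\in\mathcal H$ (indeed every capacity), with no continuity caveat.

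That caveat is the one place your proposal falls short of what the lemma asserts. The statement covers non-left-continuous $h$, and the paper genuinely needs this (e.g.\ $h(t)=\id_{\{t\ge 1-p\}}$, the distortion of $\VaR_p$, in Section \ref{sec:RA}). You flag the issue honestly and your diagnosis of why one cannot simply modify $h$ at its jumps is exactly right, but the repair is asserted rather than executed, and it is less routine than you claim: it needs one more ingredient beyond your quantile identity. Concretely, after translating so that $Z=X+Y\ge 0$, one writes $\int_0^\infty h(\p(Z>x))\,\d x=\int_0^1 \lambda\big(\{x\ge 0: h(\p(Z>x))>s\}\big)\,\d s$, where $\lambda$ is Lebesgue measure; the inner length equals a \emph{left} quantile of $Z$ for some values of $s$ and a \emph{right} quantile for others, at levels determined by $h$ and $s$ alone (this is precisely where jumps of $h$ enter). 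So in addition to $Q_p(X+Y)=Q_p(X)+Q_p(Y)$ you need its right-continuous companion $G(p^+)=F_\mu^{-1}(p^+)+F_\nu^{-1}(p^+)$; since the level and the left/right choice do not depend on the law of $Z$, the inner length then splits additively for every $s$, and integrating in $s$ finishes the proof. With that supplement (or with the classical change-of-variables argument writing $X=f(Z)$, $Y=g(Z)$, $f+g=\mathrm{id}$, which bypasses quantiles entirely), your outline closes; as written, it stops one small lemma short of the full statement.
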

Lemma \ref{lem:1} follows from the fact that all Choquet integrals, including $\EE^h$, are additive for comonotonic random variables (e.g., \cite{S86}).

For random variables $X$ and $Y$, we say that 
  $X$ is {dominated by}  $Y$ in \emph{convex order}, denoted by $X \le_{\rm cx} Y$, if $\E[f(X)]\leq\E[f(Y)]$ for any convex function  $f$ provided that the expectations exist. 
A function $\rho:\X \to \R$ is \emph{increasing in convex order} if $\rho(X)\le \rho(Y)$ whenever $X\le_{\rm cx}Y$.
In the above definition, if ``convex function" is replaced with ``increasing convex function", ``concave function", or ``increasing concave function", then the three resulting orders are called increasing convex order, concave order, and increasing concave order, respectively.  
\begin{lemma}\label{lem:1p}
For $h\in \mathcal H$,  the following are equivalent: 
\begin{enumerate}[(i)]
\item 
  $\EE^h$ is subadditive (resp.~superadditive);
  \item 
  $\EE^h$ is increasing in convex order (resp.~concave order);
\item   $\EE^h$ is increasing in increasing convex order  (resp.~increasing concave order);
  \item  $h$ is concave  (resp.~convex).  
  \end{enumerate}
\end{lemma}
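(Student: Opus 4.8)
The plan is to prove the four equivalences in the non-parenthetical (concave/subadditive) case through a single cycle of implications,
$$\text{(iv)}\Rightarrow\text{(iii)}\Rightarrow\text{(ii)}\Rightarrow\text{(i)}\Rightarrow\text{(iv)},$$
and then to obtain the parenthetical (convex/superadditive) case by duality. Writing $\tilde h(t)=1-h(1-t)$ and using the identity $\EE^h[X]=-\EE^{\tilde h}[-X]$ (Lemma \ref{lem:2}), one checks that $h$ convex, superadditivity, increasing in concave order, and increasing in increasing concave order are exactly the images under $X\mapsto -X$ of $\tilde h$ concave, subadditivity, increasing in convex order, and increasing in increasing convex order. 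Indeed, since concave order is the reverse of convex order, ``increasing in concave order'' means ``decreasing in convex order'', and negation interchanges the two together with sub/superadditivity and convexity/concavity of the distortion; so no separate argument is needed for the parenthetical claims.

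Two arrows of the cycle are soft. For (iii)$\Rightarrow$(ii), since $X\le_{\rm cx}Y$ implies $X\le_{\rm icx}Y$ (increasing convex functions are a subclass of convex functions), any functional that respects the increasing convex order a fortiori respects the convex order. For (ii)$\Rightarrow$(i), I would invoke the classical fact that the comonotonic coupling is largest in convex order: if $X^c,Y^c$ are comonotonic with $X^c\laweq X$ and $Y^c\laweq Y$, then $X+Y\le_{\rm cx}X^c+Y^c$. Applying (ii), then comonotonic additivity (Lemma \ref{lem:1}) and law invariance of $\EE^h$, gives $\EE^h[X+Y]\le\EE^h[X^c+Y^c]=\EE^h[X^c]+\EE^h[Y^c]=\EE^h[X]+\EE^h[Y]$, i.e.\ subadditivity. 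Note that traversing the whole cycle then yields the nontrivial ``upgrade'' (ii)$\Rightarrow$(iii) for free, so no direct argument for it is required.

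For (i)$\Rightarrow$(iv) I would test subadditivity on indicators. From \eqref{yaari:theory} one computes $\EE^h[\id_A]=h(\p(A))$ and, for two events, $\EE^h[\id_A+\id_B]=h(\p(A\cup B))+h(\p(A\cap B))$. Since the space is atomless, for any $a,b\in[0,1]$ and any admissible $p\in[\max(0,a+b-1),\min(a,b)]$ there exist events $A,B$ with $\p(A)=a$, $\p(B)=b$ and $\p(A\cap B)=p$; subadditivity then forces $h(p)+h(a+b-p)\le h(a)+h(b)$. Taking $a=b=(x+y)/2$ and $p=x$ for arbitrary $x<y$ yields $h\big((x+y)/2\big)\ge\tfrac12\big(h(x)+h(y)\big)$, i.e.\ midpoint concavity; since $h$ is increasing (hence measurable), this upgrades to concavity on $(0,1)$, and the boundary values $h(0)=0$, $h(1)=1$ extend it to $[0,1]$.

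The analytic heart, and the step I expect to be the main obstacle, is (iv)$\Rightarrow$(iii). Here I would use the quantile representation \eqref{distor2}, $\EE^h[X]=\int_0^1 Q_{1-t}(X)\,\d h(t)$, together with the standard characterization that $X\le_{\rm icx}Y$ iff $\ES_p(X)\le\ES_p(Y)$ for all $p\in[0,1)$, equivalently $\int_t^1 Q_u(X)\,\d u\le\int_t^1 Q_u(Y)\,\d u$ for all $t$ (with $\E[X]\le\E[Y]$ at $t=0$). After the change of variable $u=1-t$, $\EE^h[X]$ becomes an integral of $Q_u(X)$ against a weight $w(u)$ that is increasing precisely because $h$ is concave, and a Stieltjes integration by parts rewrites $\EE^h[Y]-\EE^h[X]$ as a nonnegative combination, with weights $w(0)\ge0$ and $\d w\ge0$, of the quantities $\E[Y]-\E[X]$ and $\int_t^1(Q_u(Y)-Q_u(X))\,\d u$; both are nonnegative under $X\le_{\rm icx}Y$, giving $\EE^h[X]\le\EE^h[Y]$. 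Equivalently and more conceptually, concavity of $h$ yields the Kusuoka-type representation of $\EE^h$ as a mixture $\int\ES_p\,m(\d p)$ of expected shortfalls, each increasing in increasing convex order, hence so is the mixture. The delicate point—and what makes this the hard step—is justifying the integration by parts (or the mixture representation) for a general, possibly discontinuous, distortion $h$, which I would handle by reducing to a left-continuous version and a monotone approximation.
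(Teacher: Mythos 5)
Your proof is correct, but it is worth noting that the paper does not actually prove this lemma: it cites Theorem 3 of \cite{WWW20} for the equivalence of (i), (ii) and (iv), and merely remarks that (iii) follows from (ii) ``using standard arguments in stochastic order.'' Your cycle (iv)$\Rightarrow$(iii)$\Rightarrow$(ii)$\Rightarrow$(i)$\Rightarrow$(iv), with the $\tilde h$-duality of Lemma \ref{lem:2} handling the parenthetical case, is therefore a genuinely self-contained alternative, and all four arrows check out: (iii)$\Rightarrow$(ii) is the trivial inclusion of orders; (ii)$\Rightarrow$(i) correctly combines $X+Y\le_{\rm cx}X^c+Y^c$ with comonotonic additivity (Lemma \ref{lem:1}) and law invariance; (i)$\Rightarrow$(iv) via $\EE^h[\id_A+\id_B]=h(\p(A\cup B))+h(\p(A\cap B))$ on an atomless space yields midpoint concavity, which upgrades to concavity since $h$ is monotone, hence measurable; and (iv)$\Rightarrow$(iii) is the standard integration-by-parts (equivalently, Kusuoka mixture-of-$\ES$) argument. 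One simplification at the step you flag as delicate: an increasing concave $h$ with $h(1)=1$ is automatically continuous on $(0,1]$ (concavity forces $h(1^-)\ge h(1)$, monotonicity the reverse), so \eqref{distor2} applies directly and the only possible discontinuity is a jump at $0$, which contributes an essential-supremum term $\lim_{p\to 1}\ES_p$ that is itself increasing in increasing convex order; no separate left-continuity reduction or monotone approximation is needed. What your route buys is a proof from first principles within the paper's setting of bounded random variables; what the citation buys the authors is the more general statement of \cite{WWW20} for non-monotone $h$, where the indicator test in (i)$\Rightarrow$(iv) and the comonotonic-maximality step require more care.
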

A more general version of 
Lemma \ref{lem:1p} can be found in Theorem 3 of \cite{WWW20}, where several other properties are considered and $h$ is not assumed monotone.
Point (iii) is not included in Theorem 3 of \cite{WWW20} but its equivalence to (ii) with increasing $h$ can be easily shown using standard arguments in stochastic order. 
As an example, for $\EE^h=\ES_p$ where $p\in (0,1)$, the function $h$ is concave, and hence $\ES_p$ is subadditive; this is a key property that makes  $\ES_p$ a coherent risk measure in the sense of \cite{ADEH99}.

The next lemma on sign changes of distorted expectations will also be useful.
For $h \in \mathcal H$,   let $\tilde h\in \mathcal H$ be given by
 $\tilde h(t)=1-h(1-t)$ for $t\in [0,1]$, which  is called the \emph{dual} of $h$. 
 If $h$ is continuous, then \eqref{distor2} can be rewritten as 
$$
\EE^h[X]=\int_0^1 Q_{1-t}(X) \d h(t) = \int_0^1 Q_t( X) \d \tilde h(t). 
$$
\begin{lemma}\label{lem:2}
For $h\in \mathcal H$ and $X\in \X$, $\EE^h (X)= -\EE^{\tilde h}(-X)$.
\end{lemma}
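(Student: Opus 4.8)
The plan is to compute $\EE^{\tilde h}[-X]$ directly from the defining integral formula \eqref{yaari:theory} and show it equals $-\EE^h[X]$, rather than appealing to the quantile representation \eqref{distor2} (which would require $h$ to be continuous or left-continuous). Since the claim is stated for an arbitrary $h\in\mathcal H$ and an arbitrary bounded $X$, working from \eqref{yaari:theory} is the most robust route and avoids any regularity assumption on $h$.

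First I would write out
$$\EE^{\tilde h}[-X]=\int_0^{\infty}\tilde h(\p(-X>x))\,\d x+\int_{-\infty}^0\big(\tilde h(\p(-X>x))-1\big)\,\d x,$$
then substitute $\tilde h(t)=1-h(1-t)$ together with $\p(-X>x)=\p(X<-x)=1-\p(X\geq -x)$, so that $\tilde h(\p(-X>x))=1-h(\p(X\geq -x))$. The key observation is that I may replace $\p(X\geq -x)$ by $\p(X>-x)$ inside both integrals: the two survival functions differ only at those $x$ with $\p(X=-x)>0$, i.e.\ at the (at most countably many) images of atoms of $X$, a Lebesgue-null set in $x$. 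After this replacement the first integrand becomes $1-h(\p(X>-x))$ and the second becomes $-h(\p(X>-x))$.

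Next I would perform the change of variables $y=-x$ in each integral, reversing the limits of integration and the sign of $\d x$. The first integral becomes $\int_{-\infty}^0\big(1-h(\p(X>y))\big)\,\d y=-\int_{-\infty}^0\big(h(\p(X>y))-1\big)\,\d y$, and the second becomes $-\int_0^{\infty}h(\p(X>y))\,\d y$. Summing the two reproduces exactly $-\EE^h[X]$ by \eqref{yaari:theory}, which rearranges to the claimed identity $\EE^h[X]=-\EE^{\tilde h}[-X]$.

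The computation is essentially bookkeeping, so I expect no deep obstacle; the only point demanding care is the $\{>\}$-versus-$\{\geq\}$ substitution, which must be justified by the atom argument above (equivalently, one could invoke law-invariance of $\EE^h$ and $\EE^{\tilde h}$ together with continuity under suitable approximation). Tracking the signs and the reversal of the integration bounds under $y=-x$ is the other place where a slip is easy, so I would verify the nonnegative special case $X\geq 0$ as a sanity check, where $\EE^h[X]=\int_0^{\infty}h(\p(X>x))\,\d x$ and the half-line integrals over $(-\infty,0)$ drop out, confirming the signs come out correctly.
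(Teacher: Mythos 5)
Your argument is correct. Note, however, that the paper does not actually prove Lemma \ref{lem:2}: it simply cites \citet[Lemma 2]{WWW20}, remarking that the identity holds there even for non-monotone $h$. So your proposal is not so much a different route as the only route on display — a self-contained verification from the defining formula \eqref{yaari:theory}. The computation itself is sound: the substitution $\tilde h(\p(-X>x))=1-h(\p(X\ge -x))$ is right, the replacement of $\p(X\ge -x)$ by $\p(X>-x)$ is legitimately justified by the countability of the atoms of $X$ (so the two integrands differ only on a Lebesgue-null set of $x$), and the change of variables $y=-x$ produces exactly the two pieces of $-\EE^h[X]$ with the correct signs. Working from \eqref{yaari:theory} rather than from the quantile representation \eqref{distor2} is the better choice here, since it requires no left-continuity of $h$, matching the generality in which the lemma is stated; boundedness of $X$ guarantees all integrals are finite, so no integrability issue arises. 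The one cosmetic point: the paper's display of \eqref{yaari:theory} has a misplaced parenthesis ($h(\p(X>x)-1)$ instead of $h(\p(X>x))-1$), and you correctly read it as the standard Choquet integral with the $-1$ outside $h$.
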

Lemma \ref{lem:2} can be found in \cite{DKLT12} and   \citet[Lemma 2]{WWW20}, which also holds for non-monotone $h$ in the latter reference. This lemma will be useful by allowing us to convert between a minimization problem and a corresponding maximization problem.

Finally,   a distorted expectation may be written as the worst-case or best-case value of expectation over a collection of probability measures. In particular,
  $h$ is concave if and only if  there exists a collection $\mathcal Q  $ of  probability measures on $(\Omega,\mathcal A)$ containing $\p$ such that 
 \begin{align} 
\EE^h[X] =  \sup_{P\in \mathcal Q}\E^P[X]\mbox{~~~for all $X\in \X$,}
\label{eq:worst-case} \end{align}
 and 
 $h$ is convex if and only if   there exists a collection $\mathcal Q  $  of probability measures on $(\Omega,\mathcal A)$ 
 containing $\p$ such that  \begin{align} 
\EE^h[X] =  \inf_{P\in \mathcal Q}\E^P[X] \mbox{~~~for all $X\in \X$.}
\label{eq:best-case} 
\end{align}
The representations \eqref{eq:worst-case} and \eqref{eq:best-case} 
are direct consequences of the fact that $\EE^h$ with a concave $h$ is a sub-linear expectation, and also a coherent risk measure (\cite{ADEH99}).
An example of such a representation is \eqref{eq:ES} in Section \ref{sec:motivation}. 
Using \eqref{eq:worst-case} and \eqref{eq:best-case}, a special case of  the distorted optimal transport   \eqref{eq:distorted} is to minimize the worst-case or best-case values of the expected cost over an uncertainty set, as   explained in Section \ref{sec:motivation}.

\section{General results}
\label{sec:3}

In this section, we present some general results on non-linear  and distorted optimal  transport.
In what follows, the cost function $c:\R^2\to \R$ is monotone if it is increasing or decreasing in the component-wise sense. 
\begin{theorem}\label{th:general}  
The copula $C^+ $ is universally optimal for any $(c,\EE)$ with super-linear $\EE$ and monotone submodular  $c$.
The copula $C^- $ is universally optimal for any  $(c,\EE)$ with  sub-linear $\EE$  and monotone supermodular    $c$.
\end{theorem}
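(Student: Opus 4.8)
The plan is to reduce the distorted statement to the classical rearrangement inequality by comparing the cost random variables in a suitable stochastic order. I treat the first assertion in detail; the second is symmetric (and can also be obtained from the first through Lemma~\ref{lem:2}).

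Fix a comonotonic pair $(X,Y)$ with marginals $\mu,\nu$ and an arbitrary competitor $(X',Y')$ with $X'\laweq X$ and $Y'\laweq Y$. Because $h$ is convex, Lemma~\ref{lem:1p}(iii) says that $\EE^h$ is increasing in the increasing concave order. Hence it is enough to prove the order relation $c(X,Y)\le_{\mathrm{icv}}c(X',Y')$, after which $\EE^h[c(X,Y)]\le\EE^h[c(X',Y')]$ is automatic; since the comonotonic pair was arbitrary, this gives universal optimality of $C^+$. To verify the order relation I use the standard generating family of the increasing concave order: for bounded random variables, $A\le_{\mathrm{icv}}B$ holds if and only if
\begin{equation*}
\E\big[(t-A)_+\big]\ \ge\ \E\big[(t-B)_+\big]\qquad\text{for every }t\in\R,
\end{equation*}
the point being that the maps $x\mapsto-(t-x)_+$ are increasing and concave and, together with the increasing affine limits as $t\to\infty$, generate all increasing concave functions (so this one family also encodes the comparison of means). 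Writing $\Phi_t(x,y):=(t-c(x,y))_+$, the desired inequality therefore reduces to showing that the comonotonic coupling \emph{maximizes} $\E[\Phi_t(X,Y)]$ for each $t$.

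The crux is a composition lemma: if $c$ is monotone and submodular, then $\Phi_t=(t-c)_+$ is supermodular for every $t$. Here $\Phi_t=\phi\circ c$ with $\phi(z)=(t-z)_+$ decreasing and convex, and I would check the four-point supermodularity inequality by hand. For $x_1\le x_2$ and $y_1\le y_2$ put $a=c(x_1,y_1)$, $b=c(x_2,y_2)$, $e=c(x_1,y_2)$, $f=c(x_2,y_1)$; submodularity gives $a+b\le e+f$, and monotonicity of $c$ (whether increasing or decreasing in each argument) forces the two mixed values $e,f$ to lie between the two pure values $a,b$. Shifting the appropriate extreme of $\{a,b\}$ to equalize the two sums produces a pair that majorizes $(e,f)$, so convexity of $\phi$ yields the inequality for the equalized pair, and monotonicity of $\phi$ absorbs the residual shift; together these give $\phi(a)+\phi(b)\ge\phi(e)+\phi(f)$, i.e.\ supermodularity of $\Phi_t$. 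This is precisely the step in which monotonicity of $c$ is indispensable, matching the paper's remark that monotonicity cannot be dropped in the distorted setting.

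Granting the composition lemma, the proof closes quickly: since $-\Phi_t$ is submodular, the classical rearrangement result recalled before the definition of universal optimality (see \cite{PW15}) shows that $C^+$ minimizes $\E[-\Phi_t]$, equivalently $C^+$ maximizes $\E[\Phi_t]$, which is exactly the displayed inequality. For the second assertion the mirror argument applies: with $h$ concave, $\EE^h$ is increasing in the increasing convex order by Lemma~\ref{lem:1p}(iii), one uses the generating family $x\mapsto(x-t)_+$, and the analogous composition lemma gives that $(c-t)_+$ is supermodular whenever $c$ is monotone supermodular, so that counter-monotonicity minimizes $\E[(c-t)_+]$ and hence $c(X,Y)\le_{\mathrm{icx}}c(X',Y')$ for the $C^-$ coupling; alternatively, Lemma~\ref{lem:2} applied with cost $-c$ and distortion $\tilde h$ turns this concave/supermodular minimization into the convex/submodular case already settled. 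I expect the supermodularity of $(t-c)_+$ to be the main obstacle, as it is the only genuinely new ingredient and the sole place where both submodularity and monotonicity of $c$ are used; the remainder is bookkeeping with known stochastic-order facts and the classical rearrangement inequality.
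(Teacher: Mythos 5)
Your proposal is correct and takes essentially the same route as the paper: the heart of both arguments is the composition lemma that a convex monotone outer function applied to a monotone submodular (resp.\ supermodular) cost is supermodular, combined with the classical rearrangement inequality for supermodular expectations and the stochastic-order monotonicity of $\EE^h$ from Lemma~\ref{lem:1p}. The only cosmetic differences are that you prove the convex-$h$ statement directly through the increasing concave order generated by the stop-loss family $x\mapsto (t-x)_+$, whereas the paper establishes the concave-$h$ maximization statement via the increasing convex order and then dualizes with Lemma~\ref{lem:2}.
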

\begin{proof}
We first consider the case that $\EE=\EE^h$.
Let $h$ be concave and $c$ be supermodular and monotone. 
Let $X,Y$ be comonotonic. 
We next show that, for $X'\laweq X$ and $Y'\laweq Y$, 
\begin{align}
\label{eq:icx} 
\E[ f \circ c(X,Y)] \ge \E[ f \circ c(X',Y')] \mbox{~~for all increasing convex $f$.}
\end{align} 
That is, $c(X,Y)$ dominates $c(X',Y')$ in increasing convex order. 
Note that $\E[g (X,Y)] \ge \E[g(X',Y')]$ for all supermodular functions $g$; see \citet[Theorem 3.9.8]{MS02}.
It remains to show that $f\circ c$ is supermodular. 
For any $\mathbf u,\mathbf v\in \R^2$, 
let $t=c(\mathbf u \land \mathbf v) + c(\mathbf u \lor \mathbf v) -   c(\mathbf u) -c(\mathbf v)  $.
We have  $t\ge 0$ because $c$ is supermodular.  
Monotonicity of $c$ implies  either $c(\mathbf u \land \mathbf v) \le  c(\mathbf u)  \le c(\mathbf u \lor \mathbf v)
$
or 
$c(\mathbf u \land \mathbf v) \ge  c(\mathbf u)  \ge c(\mathbf u \lor \mathbf v)
$.
Hence, there exists $\lambda \in [0,1]$ such that 
$ c(\mathbf u)
=\lambda c(\mathbf u \land \mathbf v)  + (1-\lambda) c(\mathbf u \lor \mathbf v)
$,
which implies 
$ c(\mathbf v)+t
=(1-\lambda)  c(\mathbf u \land \mathbf v)  +  \lambda c(\mathbf u \lor \mathbf v)
$. 
Convexity and increasing monotonicity of $f$ yield
 $$
f ( c(\mathbf u \land \mathbf v)) + f ( c(\mathbf u \lor \mathbf v)) \ge 
f ( c(\mathbf u ))+ f (c(  \mathbf v) +t )
 \ge
f ( c(\mathbf u) ) +f ( c(\mathbf v)),
$$
and thus $f\circ c$ is supermodular. 
Therefore, \eqref{eq:icx} holds.
By Lemma \ref{lem:1p}, $\EE^h$ is increasing in increasing convex order. 
Hence, the comonotonic vector $(X,Y)$ maximizes $ \EE^h[ c(X,Y)] $.
Using Lemma \ref{lem:2}, $(X,Y)$ minimizes $\EE^{\tilde h}[-c(X,Y)]$.
Note that $\tilde h$ is an arbitrary  convex distortion function
and $-c$ is an arbitrary monotone  submodular cost function. 
Therefore, we conclude that $C^+$ is universally optimal for any $(c,h)$ with $c$ monotone and submodular and $h$ convex.

Next, suppose that $\EE$ is a super-linear expectation. Using Theorem 4 of \cite{K01},\footnote{This result has a continuity assumption that is later removed by \cite{JST06} and \cite{D12}.} there exists a set $\mathcal H_0\subseteq \mathcal H$ of convex distortion functions such that $\EE[X] = \inf_{h\in \mathcal H_0} \EE^h[X]$ for $X\in \R$. Since $C^+$ is universally optimal for   $(c,h)$ with any convex $h$, 
it is also universally optimal for $(c,\EE)$.

The other statement is symmetric by  using the fact that $\E[g (X,Y)] \ge \E[g(X',Y')]$ for all submodular functions $g$, where $X,Y$ are counter-monotonic, $X'\laweq X$ and $Y'\laweq Y$.
\end{proof}

 Different from the classic transport problem,  the cost function $c$ is assumed to be monotone in Theorem \ref{th:general}. This excludes, in particular, the popular cost function $c(x,y)=|x-y|^p$.
Unfortunately, this restriction is essential and cannot be dropped. 
For instance, take $c(x,y)=x+y$ which is both supermodular and submodular. 
With given marginal distributions, 
a counter-monotonic random vector $(X,Y)$ minimizes $\ES_p(X+Y)$, which has a simple interpretation that a maximally hedged portfolio has the smallest risk. This optimality is implied by the second statement in Theorem \ref{th:general}.
On the other hand, if we take $c(x,y)=x-y$ which is also both supermodular and submodular,
then a \emph{comonotonic} random vector $(X,Y)$ minimizes $\ES_p(X-Y)$, with the same interpretation as above.
We can see that although $c$ is submodular in both cases, the optimal transport plans (copulas) are completely different, 
due to the fact that $c$ is not monotone in the second example.
This explains why monotonicity is important in distorted optimal transport problems. 

To explain why monotonicity is irrelevant to the classic transport problem,  for the linear expectation, optimality of  $(X,Y)$ for the cost $c$ is the same as that for the cost $c'(x,y)=c(x,y)+f(x)+g(y)$ with arbitrary functions $f$ and $g$, because monotonicity of the cost $c'$  can be amended by freely choosing $f$ and $g$.
However, this is not true for distorted transport problems, since $\EE^h[c'(X,Y)] \ne \EE^h[c(X,Y)]+\EE^h[f(X)]+\EE^h[g(Y)]$ in general, and monotonicity of the cost function cannot be adjusted freely by adding separable terms. 
 
Next, we state a weak form of duality. For any cost function $c$ and 
$\mu \in \Pi(\R)$,
let 
$$S_c=\{( \phi,\psi ) \mid \phi(x)+\psi(y) \le c(x,y) \mbox{~for all $x,y\in \R$}\},$$
and let $X^\mu$ be a random variable with distribution $\mu$.

\begin{theorem}
\label{th:dual}
Let $\EE$ be a super-linear expectation. For any cost function $c$, weak duality holds:
\begin{align}\label{eq:weak:dual}
\inf_{X\lawis \mu; Y\lawis \nu} \EE  [c(X,Y)]\ge \sup_{(\phi,\psi)\in S_c} \left\{ \EE [\phi(X^\mu)] + \EE [\psi(X^\nu)]\right\}.
\end{align} 
If $c$ is affine (i.e., $c(x,y)=\alpha+\beta x+\gamma y$ for some $\alpha,\beta,\gamma\in \R$) and $\EE=\EE^h$ for some $h\in \mathcal H$, then strong duality holds:
\begin{align}\label{eq:strong:dual}
\inf_{X\lawis \mu; Y\lawis \nu} \EE^h [c(X,Y)] = \sup_{(\phi,\psi)\in S_c} \left\{ \EE^h[\phi(X^\mu)] + \EE^h[\psi(X^\nu)]\right\}.
\end{align}
\end{theorem}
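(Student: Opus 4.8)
The plan is to derive the weak inequality directly from monotonicity and superadditivity of $\EE^h$, and then to verify that an affine cost is exactly the case in which every inequality in that argument can be saturated simultaneously. For weak duality, fix any $(\phi,\psi)\in S_c$ and any admissible pair $X\lawis\mu$, $Y\lawis\nu$. The defining inequality of $S_c$ gives $\phi(X)+\psi(Y)\le c(X,Y)$ almost surely, so monotonicity of $\EE^h$ (immediate from $h$ being increasing, via \eqref{yaari:theory}) yields $\EE^h[\phi(X)+\psi(Y)]\le\EE^h[c(X,Y)]$. Because $h$ is convex, Lemma \ref{lem:1p} tells us $\EE^h$ is superadditive, hence $\EE^h[\phi(X)]+\EE^h[\psi(Y)]\le\EE^h[\phi(X)+\psi(Y)]$. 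Since $\EE^h$ is law-invariant, $\EE^h[\phi(X)]=\EE^h[\phi(X^\mu)]$ and $\EE^h[\psi(Y)]=\EE^h[\psi(X^\nu)]$. Chaining these, $\EE^h[\phi(X^\mu)]+\EE^h[\psi(X^\nu)]\le\EE^h[c(X,Y)]$ for every admissible $(X,Y)$ and every $(\phi,\psi)\in S_c$; taking the infimum over couplings and then the supremum over dual pairs gives \eqref{eq:weak:dual}. I note that convexity of $h$ enters exactly once, in the superadditivity step, and nowhere else.

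For strong duality with $c(x,y)=\alpha+\beta x+\gamma y$, it remains only to prove the reverse of \eqref{eq:weak:dual}, which I would do by exhibiting a single dual pair attaining the primal infimum. Take $\phi(x)=\alpha+\beta x$ and $\psi(y)=\gamma y$; then $\phi(x)+\psi(y)=c(x,y)$ identically, so $(\phi,\psi)\in S_c$ and the monotonicity step above holds with equality. Using that constants are comonotonic with everything, Lemma \ref{lem:1} gives $\EE^h[\alpha+Z]=\alpha+\EE^h[Z]$ (with $\EE^h[\alpha]=\alpha$ since $h(1)=1$), so both the primal and this dual value reduce, up to the common constant $\alpha$, to quantities built from $\EE^h[\beta X^\mu]$ and $\EE^h[\gamma X^\nu]$. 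Moreover, superadditivity together with law-invariance already shows $\EE^h[\beta X+\gamma Y]\ge\EE^h[\beta X^\mu]+\EE^h[\gamma X^\nu]$ for every coupling.

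The decisive step is to realize this lower bound as an actual value. Here I would construct the coupling that makes $\beta X$ and $\gamma Y$ comonotonic, which is not the same as making $X$ and $Y$ comonotonic: writing $X=F^{-1}_\mu(U)$ with $U\lawis\U[0,1]$, I set $Y=F^{-1}_\nu(U)$ when $\beta,\gamma$ share a sign and $Y=F^{-1}_\nu(1-U)$ when their signs differ, the degenerate cases $\beta=0$ or $\gamma=0$ being trivial. For this coupling $\beta X$ and $\gamma Y$ are both nondecreasing functions of a common uniform, hence comonotonic, so Lemma \ref{lem:1} upgrades superadditivity to equality: $\EE^h[\beta X+\gamma Y]=\EE^h[\beta X^\mu]+\EE^h[\gamma X^\nu]$. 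Adding back $\alpha$ shows the primal infimum is attained and equals $\EE^h[\phi(X^\mu)]+\EE^h[\psi(X^\nu)]$ for the affine pair above; combined with weak duality, this forces equality throughout, giving \eqref{eq:strong:dual}.

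The main obstacle, and the reason strong duality can fail for a general cost, is that the two inequalities in the weak-duality chain must be saturated at once: one needs a dual pair with $\phi(x)+\psi(y)=c(x,y)$ (to saturate monotonicity) together with a coupling making $\phi(X)$ and $\psi(Y)$ comonotonic (to saturate superadditivity). An affine $c$ is precisely the case where a single separable identity $\phi+\psi=c$ is available and the resulting $\phi,\psi$ are monotone, so both can be enforced simultaneously; for a genuinely non-separable cost no such exact decomposition exists, and the duality gap generally persists.
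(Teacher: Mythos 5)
Your proof is correct and follows essentially the same route as the paper: weak duality via monotonicity of $\EE^h$ plus superadditivity from Lemma \ref{lem:1p}, and strong duality for affine $c$ by choosing the separable pair $\phi(x)=\alpha+\beta x$, $\psi(y)=\gamma y$ and a coupling (comonotonic or counter-monotonic according to the sign of $\beta\gamma$) under which $\beta X$ and $\gamma Y$ are comonotonic, so that Lemma \ref{lem:1} turns superadditivity into equality. Your version is slightly more explicit about the quantile-transform construction, the degenerate cases $\beta=0$ or $\gamma=0$, and the treatment of the constant $\alpha$, but the substance is identical.
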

\begin{proof}
For any $(\phi,\psi)\in S_c$, by super-linearity, for any random variables $X\lawis \mu$ and $Y\lawis \nu$, we have 
$$ \EE [c(X,Y)] \ge  \EE [\phi(X)+\phi(Y)] \ge    \EE [\phi(X)]+\EE[\phi(Y)].
$$
Taking an infimum over $(X,Y)$  on the left-hand side and a supremum over $(\phi,\psi)\in S_c$ on the right-hand side yields \eqref{eq:weak:dual}.
If $\EE=\EE^h$ and $c$ is affine with $c(x,y)=\alpha+\beta x+\gamma y$, then we can take $\psi (x) =\alpha +\beta x$ and $\phi(y)= \gamma y$.
If $\beta\gamma\ge 0$, then, for comonotonic $X$ and $Y$, using comonotonic additivity of $\EE^h$ in Lemma \ref{lem:1}, 
$\EE^h [ \alpha+\beta X+\gamma Y ] = \EE^h[\alpha+\beta X] + \EE^h[\gamma Y].$ 
If $ \beta\gamma < 0$, then, for counter-comonotonic $X$ and $Y$, using the same property,
$\EE^h [ \alpha+\beta X+ \gamma Y ] = \EE^h[\alpha+\beta X] + \EE^h[ \gamma Y].$ 
In either case,  \eqref{eq:weak:dual} holds as an equality, and thus strong duality \eqref{eq:strong:dual} holds.
\end{proof}

The strong duality in Theorem \ref{th:dual} cannot be expected with more generality. 
On the right-hand side of \eqref{eq:weak:dual}, the non-linearity caused by $\EE$ is only on the marginal level,
whereas it is on the level of joint distribution for the primal formulation. 
Strong duality in the form of \eqref{eq:strong:dual} heavily relies on both comonotonic additivity of $\EE^h$ and affinity  of $c$.
The strong duality \eqref{eq:strong:dual} in Theorem \ref{th:dual} is not very useful, since for an affine cost  $c(x,y)=\alpha+\beta x+\gamma y$, the corresponding optimal transport can be explicitly obtained as either comonoconitiy or counter-monotonicity, depending on the sign of $\beta$ and $\gamma$.  
Even weak duality may fail if $h$ is not convex, as shown in the next example.
\begin{example}
Fix $p\in (0,1)$ and let $h (t)= \min\{t/(1-p),1\}$ for $t\in [0,1]$; that is, $\EE^h=\ES_p$. Note that $h$ is concave.
For $X\lawis \mu=\nu=\mathrm{U}[0,1]$, and $Y=1-X$, we have 
$$
\ES_p(X+Y) = 1< 1+p = \ES_p(X) +\ES_p(Y).
$$
Therefore, with $c(x,y)=x+y$, $\phi(x)=x$, $\psi(y)=y$, we can see that \eqref{eq:weak:dual} does not hold.
\end{example}

Generally, the computation of 
$$
\inf_{X\lawis \mu; Y\lawis \nu} \EE^h [c(X,Y)] 
$$
for non-convex and non-concave $h$ is a challenging task, and this holds true even for the linear cost $c(x,y)=x+y$. \cite{WXZ19} obtained the following simple lower bound
\begin{align}\label{eq:WXZ}
\inf_{X\lawis \mu; Y\lawis \nu} \EE^h [X+Y] \ge \EE^{  h^* }[X^\mu] + \EE^{ h^*}[X^\nu],
\end{align}
where $  h^*:=\sup \{g\le h \mid \mbox{$g$ is convex}\}$ is the convex envelope of $h$ (see Figure \ref{issnew} below), with equality holding if the marginal distributions $\mu$ and $\nu$ satisfy the so-called conditional joint mixability; see Theorem 4.3 of \cite{WXZ19}. 
The latter property is nontrivial, and strict inequality in \eqref{eq:WXZ} often holds; this may explain the difficulty to establish strong duality.

From next section on, we will focus on distorted expectations and a linear cost.

 \section{Linear cost with a strictly convex or concave distortion}
 \label{sec:4}

From now on, our main  objective  is to study universal optimality  with linear cost $c(x,y)=x+y$.  This setting also includes the more general case of separable cost, $c(x,y)=f(x)+g(y)$ for arbitrary measurable functions $f$ and $g$ because the latter can be converted into the simple linear case by marginal transformations, that is, replacing $\mu$ and $\nu$ with $\mu\circ f^{-1}$ and $\nu\circ g^{-1}$.

Recall that the copula  $C $ is  {universally optimal} for $h $ (omitting the cost function $c$) if
	for any $X,Y\in \mathcal X$ with copula $C$,  
	\begin{align*}
	\EE^h [X+Y] \le \EE^h [X'+Y'] \mbox{~~~for all $X',Y'\in \mathcal X$  satisfying $X\laweq X'$ and $Y\laweq Y'$}.
	\end{align*}  
The case that $h$ is neither convex nor concave is the most mathematically challenging, but in this section we first completely understand the case with convexity or concavity.

Below we present a necessary condition for a copula to be universally optimal, which will be useful in proving several other results in the paper. 
For fixed $h\in \mathcal H$ and $u,v\in [0,1]$, 
define the function $K^h_{u,v}:  [ C^-(u,v), C^+(u,v) ] \to [0,1]$ by 
	\begin{align}\label{eq:K}
	K^h_{u,v}(t) = h( 1 - t) + h(1-u-v+t).
	\end{align}
	
\begin{lemma}\label{lemma:1}
Suppose that $C\in \mathcal C$ is universally optimal for $h\in \mathcal H$.  We have $ K^h_{u,v}(C(u,v)  )  \le K^h_{u,v}(t)  $ for all   $ u,v \in [0,1]$ and  $ t \in [ C^-(u,v), C^+(u,v) ].$
\end{lemma}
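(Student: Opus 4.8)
The plan is to reduce the claim to an elementary two-point (Bernoulli) computation that reproduces $K^h_{u,v}$ exactly. Fix $u,v\in[0,1]$ and take the Bernoulli marginals $\mu = u\delta_0 + (1-u)\delta_1$ and $\nu = v\delta_0 + (1-v)\delta_1$, supported on $\{0,1\}$, whose left-quantile functions are $F^{-1}_\mu(s)=\id_{\{s>u\}}$ and $F^{-1}_\nu(s)=\id_{\{s>v\}}$. Setting $(U,V)\lawis C$ and $X=F^{-1}_\mu(U)$, $Y=F^{-1}_\nu(V)$ produces a pair satisfying $\p(X\le x,Y\le y)=C(F_\mu(x),F_\nu(y))$, which is precisely Sklar's representation \eqref{eq:sk}; hence $C$ is a copula of $(X,Y)$, while $X\lawis\mu$, $Y\lawis\nu$, and $\p(X=0,Y=0)=C(u,v)$.

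Next I would evaluate $\EE^h[X+Y]$ for an arbitrary coupling of $\mu$ and $\nu$. Since both marginals are two-point, any such coupling is determined by the single number $t=\p(X=0,Y=0)$, and by the Fr\'echet--Hoeffding bounds $t$ ranges over the full interval $[C^-(u,v),C^+(u,v)]$. For a given $t$, the sum $X+Y$ takes the value $0$ with probability $t$, the value $1$ with probability $u+v-2t$, and the value $2$ with probability $1-u-v+t$. As $X+Y\ge 0$, the definition \eqref{yaari:theory} reduces to $\EE^h[X+Y]=\int_0^\infty h(\p(X+Y>x))\,\d x$, whose integrand equals $h(1-t)$ on $(0,1)$ and $h(1-u-v+t)$ on $(1,2)$ and vanishes beyond $2$; integrating gives $\EE^h[X+Y]=h(1-t)+h(1-u-v+t)=K^h_{u,v}(t)$.

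With these two facts the conclusion is immediate. The coupling induced by $C$ corresponds to $t=C(u,v)$ and yields $\EE^h[X+Y]=K^h_{u,v}(C(u,v))$; any other $t\in[C^-(u,v),C^+(u,v)]$ is realized by a coupling $(X',Y')$ with $X'\lawis\mu$ and $Y'\lawis\nu$, so that $X\laweq X'$ and $Y\laweq Y'$ and $\EE^h[X'+Y']=K^h_{u,v}(t)$. Applying the universal optimality of $C$ to this particular $(X,Y)$ and its competitors $(X',Y')$ gives $K^h_{u,v}(C(u,v))\le K^h_{u,v}(t)$ for every admissible $t$, which is exactly the asserted inequality; the degenerate boundary cases (e.g.\ $u=0$ or $v=0$), where $[C^-(u,v),C^+(u,v)]$ is a single point, hold trivially.

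The one place demanding care -- and the main potential obstacle -- is the copula subtlety caused by the atomic marginals: because $\mu$ and $\nu$ are discrete, the copula of a pair is not unique, so one must verify that $C$ genuinely qualifies as a copula of the constructed $(X,Y)$ in the sense used in the definition of universal optimality. This is precisely what the Sklar identity above certifies, since $C$ reproduces the joint distribution function on the nose. The remaining ingredients -- that $t$ sweeps the entire Fr\'echet interval as the coupling varies, and the elementary integration producing $K^h_{u,v}$ -- are routine.
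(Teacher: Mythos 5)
Your proof is correct and follows essentially the same route as the paper's: both reduce to Bernoulli marginals $\mu(\{0\})=u$, $\nu(\{0\})=v$, compute $\EE^h[X+Y]=h(1-t)+h(1-u-v+t)=K^h_{u,v}(t)$ with $t=\p(X=0,Y=0)$, and invoke universal optimality. The only cosmetic difference is that the paper realizes a competitor with prescribed $t$ via the mixture copula $\lambda C^+ + (1-\lambda)C^-$, whereas you argue directly that every $t$ in the Fr\'echet interval is attainable for two-point marginals; your explicit verification that $C$ is genuinely a copula of the constructed pair is a welcome extra care given the atomic marginals.
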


\begin{proof} 
	Given $u,v \in [0,1]$, for any $ t \in[ C^-(u,v), C^+(u,v) ] $, there exists   $\lambda \in [0,1]$ such that $$ t = \lambda C^+(u,v) + (1-\lambda) C^- (u,v).$$  Define  by $ C_0 = \lambda C^+ + (1-\lambda) C^-  $, which satisfies   $C_0 (u,v) = t$, and let $s=C(u,v)$.
	
Take $\mu$ as  a Bernoulli distribution with $\mu(\{0\})=u$ 
and $\nu$ as  a Bernoulli distribution with $\nu(\{0\})=v$. 
Let    $X\sim  \mu  $ and $Y\sim \nu $ be such that  $C_0$ is a copula of $(X,Y)$.
The joint distribution of $(X,Y)$ is given by Sklar's theorem; see \eqref{eq:sk}.
Similarly, let $X^*\sim  \mu  $ and $Y^*\sim \nu $ be such that  $C$ is a copula of $(X^*,Y^*)$. 
Write 
 $Z=X+Y$ and $Z^*=X^*+Y^*$.
 It is straightforward to compute
	\begin{align*}
	\mathbb P (Z = 0) &=   C_0(u,v)=t,\\
	\mathbb P (Z = 1) &=  u + v - 2  C_0(u,v) =u + v - 2  t  ,  \\
	\mathbb P (Z = 2) &= 1 - u - v +  C_0(u,v) =1 - u - v +  t.
	\end{align*} 
	Consequently, we have $$\EE^h  [Z]  =\int_0^{\infty} h \circ \p(Z>z)\d z =
	h(1 - t) + h( 1 - u - v + t) = K^h_{u,v}(t) .$$
	Similarly, $ \EE^h  [Z^*]  = h(1 - s) + h( 1 - u - v + s) = K^h_{u,v}(s) $.   
	By the  optimality of $C$, we obtain  $\EE^h[Z^*]\le \EE^h[Z]$, and hence $ K^h_{u,v}(s) \le  K^h_{u,v}(t)$ for all $t \in [ C^-(u,v), C^+(u,v) ] $.  
\end{proof}

Lemma \ref{lemma:1} says that the function $K^h_{u,v}(t)$ attains its minimum on the interval $[ C^-(u,v), C^+(u,v) ]$ at the point  $C(u,v)$ for any $C$ that is optimal.

\begin{remark}\label{rem:lem3}
 The proof of Lemma \ref{lemma:1} only involves mixtures of Bernoulli distributions $\mu $ and $\nu$.
 Hence, the   conclusion on $K^h_{u,v}$ in Lemma \ref{lemma:1} holds true if    optimality of $C$ is  required only  for Bernoulli marginal distributions, instead of requiring it for all marginal distributions. 
\end{remark}

The next result  characterizes universal optimality in case $h$ is strictly concave or strictly convex.  Note that Theorem \ref{th:general} does not address the issue of uniqueness. 
 
\begin{theorem}\label{th:unique}

For a strictly convex $h\in \mathcal H$, $C^+$ is the unique universally optimal copula;
for a strictly concave $h\in \mathcal H$, $C^-$ is the unique universally optimal copula.
 
\end{theorem}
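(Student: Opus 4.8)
The plan is to derive existence from Theorem~\ref{th:general} and to obtain uniqueness from the necessary condition in Lemma~\ref{lemma:1}. Since $c(x,y)=x+y$ is monotone and submodular, Theorem~\ref{th:general} already gives that $C^+$ is universally optimal when $h$ is convex (and $C^-$ when $h$ is concave), so the only new content is that no other copula can be optimal. Concretely, it suffices to prove that any universally optimal $C$ satisfies $C(u,v)=C^+(u,v)$ for all $(u,v)\in[0,1]^2$ in the strictly convex case, and $C(u,v)=C^-(u,v)$ in the strictly concave case. By Lemma~\ref{lemma:1}, such a $C$ must make $C(u,v)$ a minimizer of $K^h_{u,v}$ over $[C^-(u,v),C^+(u,v)]$, so everything reduces to locating the minimizer of $K^h_{u,v}$.

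The key step is to analyze the one-dimensional function $K^h_{u,v}(t)=h(1-t)+h(1-u-v+t)$. Its two arguments sum to the constant $2-u-v$, so writing $m=1-\tfrac{u+v}{2}$ and $\delta=\tfrac{u+v}{2}-t$ I would record the symmetric form $K^h_{u,v}(t)=g(\delta)$ with $g(\delta)=h(m+\delta)+h(m-\delta)$. As $t$ runs over $[C^-(u,v),C^+(u,v)]$, the parameter $\delta$ runs over the interval of nonnegative numbers $[\,|u-v|/2,\ \tfrac{u+v}{2}-C^-(u,v)\,]$, whose left endpoint $|u-v|/2$ corresponds to $t=C^+(u,v)$ and whose right endpoint corresponds to $t=C^-(u,v)$. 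Thus the behaviour of $K^h_{u,v}$ on the feasible interval is governed entirely by the monotonicity of $g$ on $[0,\infty)$.

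Next I would establish that $g$ is strictly increasing on $[0,\infty)$ when $h$ is strictly convex (and strictly decreasing when $h$ is strictly concave). This is the two-point majorization inequality: for $0\le \delta_2<\delta_1$ the pair $(m+\delta_1,m-\delta_1)$ majorizes $(m+\delta_2,m-\delta_2)$, so strict convexity of $h$ forces $h(m+\delta_1)+h(m-\delta_1)>h(m+\delta_2)+h(m-\delta_2)$, i.e.\ $g(\delta_1)>g(\delta_2)$. Consequently, for strictly convex $h$, $K^h_{u,v}$ is strictly decreasing on $[C^-(u,v),C^+(u,v)]$ and attains its minimum uniquely at $t=C^+(u,v)$; Lemma~\ref{lemma:1} then pins down $C(u,v)=C^+(u,v)$ for every $(u,v)$, whence $C=C^+$. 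The strictly concave case is symmetric: $g$ is strictly decreasing, $K^h_{u,v}$ is strictly increasing, the minimizer is the left endpoint $t=C^-(u,v)$, and so $C=C^-$. Alternatively, the concave case follows from the convex one via Lemma~\ref{lem:2}, since $\tilde h$ is strictly convex whenever $h$ is strictly concave.

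The main obstacle is strictness, not the location of the minimizer: convexity alone already shows $K^h_{u,v}$ is minimized at $C^+(u,v)$, but to exclude every competing copula I need the minimizer to be \emph{unique}, which is exactly where strict convexity enters, through the strict two-point inequality above. Two minor points require care. First, $h$ need not be differentiable, so I would argue monotonicity of $g$ through the majorization (secant-slope) form of strict convexity rather than through $g'$. Second, when $u$ or $v$ lies in $\{0,1\}$ the interval $[C^-(u,v),C^+(u,v)]$ degenerates to a point; there the copula value is already forced by the boundary conditions $C(u,0)=0$ and $C(u,1)=u$, so the identity $C(u,v)=C^+(u,v)$ holds trivially and the conclusion extends to the whole square.
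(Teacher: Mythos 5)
Your proposal is correct and follows essentially the same route as the paper: existence from Theorem~\ref{th:general}, then uniqueness by combining Lemma~\ref{lemma:1} with the fact that strict convexity (resp.\ concavity) of $h$ makes $K^h_{u,v}$ strictly decreasing (resp.\ increasing) on $[C^-(u,v),C^+(u,v)]$, forcing the copula value to the appropriate endpoint. The only cosmetic differences are that you pin down $C(u,v)$ at every point via the symmetric reparametrization $g(\delta)=h(m+\delta)+h(m-\delta)$, whereas the paper uses only the diagonal $u=v=r$ (plus monotonicity of $C$) in the convex case and the region $u+v\le 1$ (plus Lipschitz continuity) in the concave case; both are sound.
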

\begin{proof}
Both  optimality statements follow from Theorem \ref{th:general} by taking the linear cost $c(x,y)=x+y$. 
Below, we show the uniqueness statements.

First, let $h$ be strictly convex.  For $ r\in(0,1]$ and $t\in[(2r-1)_+,r]$, note that  $0\le1-2r+t\le 1-t\le 1$. Strict convexity of $h$ implies that 
the function $K^h_{r,r}$, that is, $K^h_{r,r}(t)= h( 1 - t) + h(1-2r+t)$,   is strictly decreasing on $[(2r-1)_+,r]$. Hence, $K^h_{r,r}(r)<K^h_{r,r}(t)$ for any $t\in[(2r-1)_+,r)$, and  $K^h_{r,r}(t)$ has the unique minimum point $t=r$ on the interval $[(2r-1)_+,r]$.
By Lemma \ref{lemma:1}, the optimality of $C$ yields 
$ C(r,r)=r$ for all $r\in(0,1],$
and this implies $C \ge C^+$. 
Since $C\le C^+$ also holds, we conclude that $C=C^+$.  

Next, let $h$ be strictly concave.  
Take $u,v\in [0,1]$ with $u+v\le1$, and write $r=(u+v)/2$. 
For any $t\in[0,r]$,  we have $0\le1-2r+t\le1-t\le1$.
Strict concavity of $h$ implies that  
the function $K^h_{u,v}$, that is, $K^h_{u,v}(t)= h( 1 - t) + h(1-u-v+t)$,   is strictly increasing on $[0,r]$.
 Hence, $K^h_{u,v}(0)<K^h_{u,v}(t)$ for any $t\in(0,r]$, and  $K^h_{r,r}(t)$ has the unique minimum point $t=0$ on the interval $[0,r]$.
By Lemma \ref{lemma:1}, the optimality of $C$ yields 
$ C(u,v)=0$ for all $u,v\in[0,1]$ with $u+v\le 1$.
This together with the $1$-Lipschitz continuity of $C$ (e.g., Theorem 2.2.4 of \cite{N06}) implies $C \le C^-$. 
Since $C\ge C^-$ also holds, we conclude that $C=C^-$.    
\end{proof}

 Theorem \ref{th:unique} yields that $C^+$ and $C^-$ are the unique universally optimal copulas for the two classes of distorted optimal transport problems with concave or convex $h$. 
 In the next section, we study the more intriguing case that $h$ is not convex nor concave.

\begin{remark}
In a different context,
a weaker version of the statement on $C^+$ in Theorem \ref{th:unique} 
can be found in \citet[Theorem 7]{C10}, where   $h$ is assumed continuously differentiable with $h'(0)<\infty$.
\end{remark}

  Theorem \ref{th:general} implies that  the copula $C^-$ is universally optimal for  a convex $h$ and 
the copula $C^+$  is universally optimal for  a concave $h$.  Below, we present a corresponding version of this result for maximizers instead of minimizers. This fact will be used later in the proof of another result.
\begin{proposition}\label{pr:trivial}
  The copula $C^+$ is universally maximal for  a concave $h$ and 
the copula $C^-$  is universally maximal for  a convex $h$.
\end{proposition}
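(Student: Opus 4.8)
The plan is to obtain this as a corollary of Theorem \ref{th:general} by reflecting the problem through the sign-change duality of Lemma \ref{lem:2}. Fix the linear cost $c(x,y)=x+y$, which is monotone and simultaneously sub- and supermodular, so that Theorem \ref{th:general} applies in both of its cases. The starting observation is that, by Lemma \ref{lem:2} applied to the random variable $X+Y$,
$\EE^h[X+Y]=-\EE^{\tilde h}[-(X+Y)]=-\EE^{\tilde h}[(-X)+(-Y)]$,
so a coupling is universally maximal for $h$ if and only if the reflected coupling $(-X,-Y)$ is universally optimal (in the minimization sense) for the dual distortion $\tilde h$. Since $-X\laweq -X^\mu$ and $-Y\laweq -X^\nu$ have marginals obtained by pushing $\mu,\nu$ forward under $t\mapsto -t$, and these push-forwards range over all distributions as $\mu,\nu$ do, the reflected problem is again a linear-cost distorted transport problem to which Theorem \ref{th:general} speaks, and universality is preserved by the reflection.

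The two structural facts that drive the reduction are the following. First, the dual map flips convexity: since $\tilde h(t)=1-h(1-t)$, the distortion $\tilde h$ is convex exactly when $h$ is concave and concave exactly when $h$ is convex. Hence Theorem \ref{th:general} identifies a Fr\'echet--Hoeffding bound for the reflected minimization, namely $C^+$ when $\tilde h$ is convex and $C^-$ when $\tilde h$ is concave. Second, both Fr\'echet bounds are invariant under the simultaneous sign change $(X,Y)\mapsto(-X,-Y)$: negating both coordinates carries a comonotonic pair to a comonotonic pair (two increasing functions of a common variable become two decreasing functions of it, hence again comonotonic) and a counter-monotonic pair to a counter-monotonic pair. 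Consequently the optimal copula produced for the reflected problem coincides with the copula of the original pair, and it is therefore universally maximal for $h$. Putting the convexity-flip together with the reflection-invariance yields the proposition.

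It is worth recording that one of the two cases admits an even shorter, reflection-free argument. When $\EE^h$ is subadditive (Lemma \ref{lem:1p}), combining subadditivity, $\EE^h[X+Y]\le \EE^h[X]+\EE^h[Y]$, with comonotone additivity (Lemma \ref{lem:1}) shows that the comonotonic coupling attains the additive upper bound $\EE^h[X^\mu]+\EE^h[X^\nu]$ and is thus maximal; the complementary case follows symmetrically. In fact this is already implicit in the intermediate step of the proof of Theorem \ref{th:general}, where the comonotonic vector is shown to maximize $\EE^h[c(X,Y)]$ in the increasing convex order, so the maximization statement is available without any new machinery.

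I expect the only genuine care-point to be the bookkeeping in the middle paragraph: one must verify jointly that passing to $\tilde h$ flips precisely the convexity hypothesis that Theorem \ref{th:general} requires \emph{and} that the extremal copula selected there is fixed by the reflection, so that ``minimizer of the reflected problem'' translates cleanly into ``maximizer of the original problem'' with the same copula. Once this correspondence is confirmed, everything else is a direct appeal to Lemmas \ref{lem:1}, \ref{lem:1p}, \ref{lem:2} and Theorem \ref{th:general}, which is why the result is essentially a corollary.
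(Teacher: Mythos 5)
Your method is sound and genuinely different from the paper's: the paper deduces the result from the convex-order extremality of comonotonic and counter-monotonic sums (Corollary 3.28 of \cite{R13}) together with Lemma \ref{lem:1p}, whereas you reflect the maximization into a minimization via Lemma \ref{lem:2} and then invoke Theorem \ref{th:general}, checking that the dual map flips convexity and that both Fr\'echet bounds are fixed by $(X,Y)\mapsto(-X,-Y)$. All of those individual steps are correct. The problem is the final bookkeeping you yourself flag as the ``care-point'': traced to the end, your argument gives that for \emph{concave} $h$ the dual $\tilde h$ is convex, the reflected minimization is solved by $C^+$, and hence $C^+$ is the universal maximizer for concave $h$; symmetrically $C^-$ is the universal maximizer for convex $h$. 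That is the \emph{transpose} of the statement as printed, so your proof does not ``yield the proposition'' as written. Your subadditivity argument delivers the same reversed pairing (subadditivity is equivalent to concavity of $h$ by Lemma \ref{lem:1p}, and it is the comonotonic coupling that attains the additive upper bound), as does the intermediate step of Theorem \ref{th:general} that you cite. A one-line sanity check with $h(t)=t^2$ and $\mathrm{U}[0,1]$ marginals ($\EE^h[2U]=2/3<1=\EE^h[1]$) confirms that $C^-$, not $C^+$, maximizes for this convex $h$; the pairing you actually derive is the correct one, and it is the one the paper itself relies on later (in the proof of Lemma \ref{lem:4-2}, counter-monotonicity maximizes $\EE^{g}$ of a sum for convex $g$). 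You should state the conclusion your argument produces and explicitly flag the mismatch with the displayed statement, rather than asserting agreement.

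A secondary, smaller issue: in your third paragraph, ``the complementary case follows symmetrically'' is not right as stated. For convex $h$, superadditivity plus comonotone additivity shows that the comonotonic coupling attains the additive \emph{lower} bound $\EE^h[X^\mu]+\EE^h[X^\nu]$, which proves minimality of $C^+$, not maximality of $C^-$; there is no counter-monotone additivity to lean on. To get the maximizer in the convex case you still need either your reflection argument or the convex-order argument of the paper, so the ``reflection-free'' shortcut covers only one of the two cases.
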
 
\begin{proof}
This proposition is a consequence of two facts. First, $X+Y$ has the largest  convex order when $X,Y$ are comonotonic and  $X+Y$ has the smallest   convex order when $X,Y$ are counter-monotonic; 
see Corollary 3.28 of \cite{R13}.
Second, $h$ is concave (resp.~convex) if and only if $\EE^h$ is increasing (resp.~decreasing) in convex order by Lemma \ref{lem:1p}.
These two facts  yield the desired statement. 
\end{proof}
A similar uniqueness statement to Theorem \ref{th:unique} can be made for the maximizers, and it is omitted.

%
%
%
%
%
%


 \section{Linear cost with an inverse S-shaped distortion}
 \label{sec:ISS}

 In this section we focus on inverse S-shaped  distortion functions for their plausibility in decision theory 
 and their intriguing mathematical properties regarding distorted optimal transport. 
We continue to consider the linear cost $c(x,y)=x+y$.

 \subsection{Inverse S-shaped distortion function and two classes of copulas}
First, we formally define this class of distortion functions.

\begin{definition}
	[{ISS distortion function}] \label{def:iss}A distortion function $h\in \cal H$ is \emph{inverse S-shaped} (ISS) if 
	it is continuous and  there exists a point $p\in (0,1)$ (called an \emph{inflection point})  such that $h$ is  concave on $(0,p)$ and   convex on $(p,1)$. 
	It is \emph{strictly ISS} if it is   strictly  concave on $(0,p)$ and strictly convex on $(p,1)$. 
\end{definition}
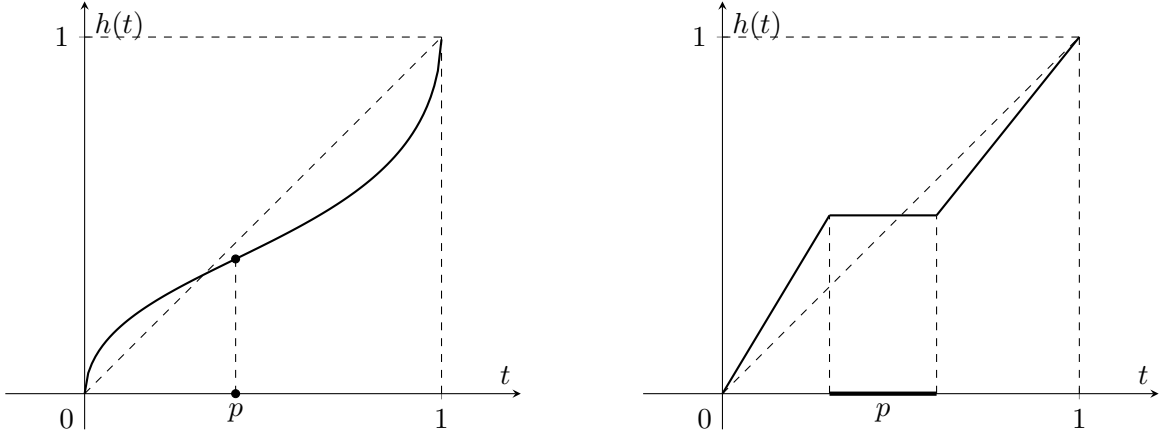
\begin{figure}[t]
\begin{center}
\begin{subfigure}[b]{0.45\textwidth}
\centering
\begin{tikzpicture}
\begin{axis}[
    axis lines = middle,
    xlabel = $t$,
    ylabel = {$h(t)$},
    domain=0:1,
    samples=100,
    ymin=-0.1,  
    ymax=1.1,
    xmin=0,  
    xmax=1,
    xtick={0,1},  
    ytick={0,1},  
    xticklabels={0,1},  
    yticklabels={0,1},  
    axis equal,  
]
\addplot [black, thick] {(x^0.6)/((x^0.6) + ((1-x)^0.6))^(1/0.6)};
\draw [dashed] (axis cs:0,0) -- (axis cs:1,1);
\draw [dashed] (axis cs:0,1) -- (axis cs:1,1);
\draw [dashed] (axis cs:1,0) -- (axis cs:1,1);
\node [below] at (axis cs:-0.05,-0.015) {0};  
\draw [dashed] (axis cs:0.4231,0.37779) -- (axis cs:0.4231,0);
\node [below] at (axis cs:0.4231,0) {$p$}; 
 \filldraw (axis cs:0.4231,0.37779) circle (1.5pt);
  \filldraw (axis cs:0.4231,0) circle (1.5pt);
\end{axis}
\end{tikzpicture}
\end{subfigure}
\hspace{2em}
\begin{subfigure}[b]{0.45\textwidth}
\centering
\begin{tikzpicture}
\begin{axis}[
     axis lines = middle,
    xlabel = $t$,
    ylabel = {$h(t)$},
    domain=0:1,
    samples=100,
    ymin=-0.1,  
    ymax=1.1,
    xmin=0,  
    xmax=1,
    xtick={0,1},  
    ytick={0,1},  
    xticklabels={0,1},  
    yticklabels={0,1},  
    axis equal,  %
]
\draw[black, thick] (axis cs:0,0) -- (axis cs:0.3,0.5);
\draw[black, thick] (axis cs:0.3,0.5) -- (axis cs:0.6,0.5);
\draw[black, ultra thick] (axis cs:0.3,0) -- (axis cs:0.6,0);
\draw[black, thick] (axis cs:0.6,0.5) -- (axis cs:1,1);
\draw [dashed] (axis cs:0,0) -- (axis cs:1,1);
\draw [dashed] (axis cs:0,1) -- (axis cs:1,1);
\draw [dashed] (axis cs:1,0) -- (axis cs:1,1);
\draw [dashed] (axis cs:0.3,0) -- (axis cs:0.3,0.5);
\draw [dashed] (axis cs:0.6,0) -- (axis cs:0.6,0.5);
\node [below] at (axis cs:0.45,0) {$p$}; 
\node [below] at (axis cs:-0.05,-0.015) {0};  

\end{axis}
\end{tikzpicture}
\end{subfigure}
\end{center}
\caption{ISS distortion functions that are strictly ISS (left, with a unique inflection point) and not strictly ISS (right, with many inflection points)}
\label{ISSf}
  \end{figure}
  
ISS distortion functions are also called \emph{cavex} (meaning ``concave-convex"); see \cite{W01}.
 Figure \ref{ISSf} presents two examples of ISS distortion functions.  
 The ISS distortion functions 
reflect  on the empirical observation that small probabilities of both very good and very bad events are both overweighed in decision making (\cite{TK92,GW99, Abdellaoui2000}).  
Clearly, a strictly ISS distortion function is strictly increasing and  has a unique inflection point. 


Next, we introduce two special classes of  copulas that are candidates for solving optimal transport in the context of ISS distortion functions. 
For any $p\in(0,1)$, define 
	\begin{align}\label{copucbstar}
&	C^\pm_p(u,v) = \left\{
\begin{array}{ll}
	 u\wedge v    & u\wedge v \in [0,1-p]  \\
	 (u+v-1) \vee (1-p) ~    &  { \mbox{otherwise};  }
\end{array}
\right. \\
\label{copucbstar2}
&	C^{\mp}_p(u,v) = \left\{
\begin{array}{ll}
	  ( u+v-1+p)_+\qquad &   u\vee v \in [0,1-p]   \\
	    u\wedge v        &  { \mbox{otherwise.}  }
\end{array}
\right.
\end{align}
Figure \ref{suppcp} presents the supports of $C^\pm_p$ and $C^\mp_p$, which are represented by some line segments in $[0,1]^2$. 
\begin{figure}[t]
\begin{center}
\begin{subfigure}[b]{0.45\textwidth}
\centering
 \begin{tikzpicture}
\draw [step = 0.5,help lines] ( 0,0 ) grid (5,5);
\fill (0,0) circle (2pt);
\fill (2,5) circle (2pt);
\fill (2,2) circle (2pt);
\fill (5,2) circle (2pt);
\fill (2,0) circle (2pt);
\fill (0,2) circle (2pt);
\draw [ thin ] (0,0) -- (5,0);
\draw [ thin ] (0,0) -- (0,5);
\draw [ thin ] (5,0) -- (5,5);
\draw [ thin ] (0,5) -- (5,5);
\draw [very thick] (0,0) -- node[ right] {} (2,2);
\draw [very thick] (2,5) -- node[ right] {} (5,2);
\node [below] at (2,0) {$1-p$}; 
\node [left] at (0,2) {$1-p$}; 
\end{tikzpicture}
\end{subfigure}
\begin{subfigure}[b]{0.45\textwidth}
\centering
 \begin{tikzpicture}
\draw [step = 0.5,help lines] ( 0,0 ) grid (5,5);
\fill (2,2) circle (2pt);
\fill (5,5) circle (2pt);
\fill (2,0) circle (2pt);
\fill (0,2) circle (2pt);
\draw [ thin ] (0,0) -- (5,0);
\draw [ thin ] (0,0) -- (0,5);
\draw [ thin ] (5,0) -- (5,5);
\draw [ thin ] (0,5) -- (5,5);
\draw [very thick] (2,0) -- node[ right] {} (0,2);
\draw [very thick] (2,2) -- node[ right] {} (5,5);
\node [below] at (2,0) {$1-p$}; 
\node [left] at (0,2) {$1-p$}; 
\end{tikzpicture}
\end{subfigure}
\end{center}
\caption{Supports of the copulas $C_{p}^\pm$  (left) and $C_p^\mp$ (right)}\label{suppcp}
\end{figure}
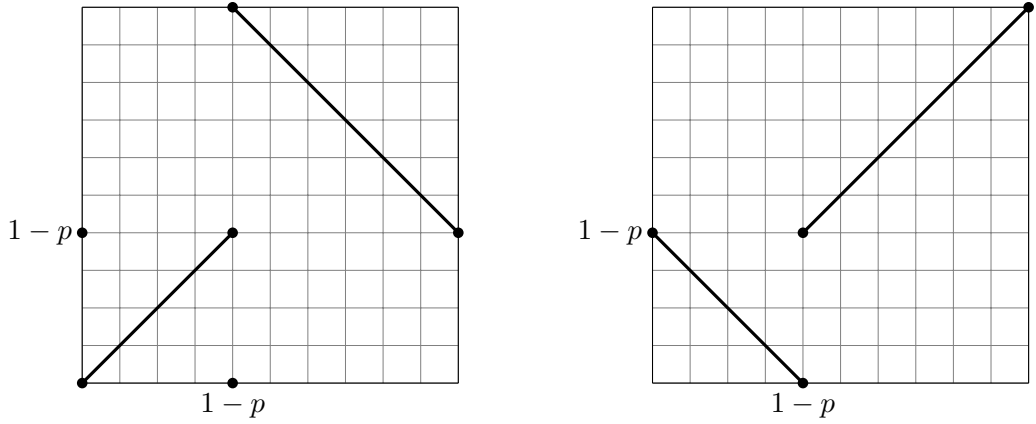

 Since $C^\pm_p (1-p,1-p)=C^\mp_p(1-p,1-p)=1-p$, 
  $C^\pm_p$ and $C^\mp_p $ are both in the set $\mathcal C_p$   defined in \eqref{cp}.
Indeed, $C^\pm_p$ and $C^\mp_p$ are ordinal sum copulas 
generated by $C^+$ and $C^-$ in different orders;  this explains our choice of  notation, with ``$\pm$" representing ``first $+$  then $-$" in the ordinal sum copula construction.   
The corresponding transport maps are illustrated in Figure \ref{fig:pmmp}.
A stochastic representation of  $C^\pm_p$, that is, a random vector
$(U,V)\lawis C^\pm_p$, is given by
\begin{align}
\label{eq:stochastic}
 U\lawis \mathrm{U}[0,1] \mbox{~and~} V=U\id_{\{U\le 1-p\}} + (2-p-U) \id_{\{U\ge 1-p\}}.
 \end{align}
A stochastic representation of  $C^\mp_p$ is given by
\begin{align}
\label{eq:stochastic2}
 U\lawis \mathrm{U}[0,1] \mbox{~and~} V=(1-p-U)\id_{\{U\le 1-p\}} +  U \id_{\{U\ge 1-p\}}.
 \end{align}

 
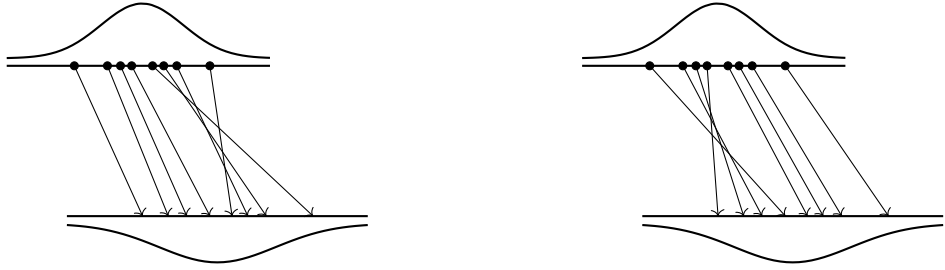
\begin{figure}[t]
\begin{center}
\begin{subfigure}[b]{0.4\textwidth}
\centering
 \begin{tikzpicture}
  \draw[-,thick] (3, 0) -- (7, 0);
  \draw[-,thick] (2.2, 2) -- (5.7, 2) ;
 \draw[->] (3.099077, 2) -- (4.007314, 0) ;
  \draw[->] ( 3.539025, 2) -- (4.348083, 0) ;
    \draw[->] ( 3.712774, 2) -- (4.593801, 0) ;
   \draw[->] (3.861236, 2) -- (4.902663, 0) ;
    \draw[->] (4.138764, 2) -- (6.274098, 0) ;
         \draw[->] (4.287226, 2) -- (5.651917, 0) ;
        \draw[->] (4.460975, 2) -- (5.406199, 0) ;
       \draw[->] (4.900923, 2) -- (5.196242, 0) ;
         
   \draw[domain=2.2:5.7, smooth, thick,variable=\x]  plot ( {\x},{exp(-(\x-4)*(\x-4)/(2*0.3))/sqrt(2*pi*0.3)+2.1}); 
  \draw[domain=3:7, smooth, thick,variable=\x]  plot ( {\x},{-exp(-(\x-5)*(\x-5)/(2*0.6))/sqrt(2*pi*0.6)-0.1}); 
   \filldraw (3.099077, 2) circle (1.5pt);
        \filldraw ( 3.539025, 2) circle (1.5pt);
          \filldraw( 3.712774, 2)  circle (1.5pt);
           \filldraw(3.861236, 2)circle (1.5pt);
              \filldraw(4.138764, 2) circle (1.5pt);
                \filldraw(4.287226, 2) circle (1.5pt);
              \filldraw (4.460975, 2) circle (1.5pt);
              \filldraw (4.900923, 2) circle (1.5pt);  
\end{tikzpicture}
\end{subfigure}
\hspace{2em}
\begin{subfigure}[b]{0.4\textwidth}
\centering

 \begin{tikzpicture}
  \draw[-,thick] (3, 0) -- (7, 0);
  \draw[-,thick] (2.2, 2) -- (5.7, 2) ;
 \draw[->] (3.099077, 2) -- (4.902663, 0) ;
  \draw[->] ( 3.539025, 2) -- (4.593801, 0) ;
    \draw[->] ( 3.712774, 2) -- (4.348083, 0) ;
   \draw[->] (3.861236, 2) -- (4.007314, 0) ;
    \draw[->] (4.138764, 2) -- (5.196242, 0) ;
         \draw[->] (4.287226, 2) -- (5.406199, 0) ;
        \draw[->] (4.460975, 2) -- (5.651917, 0) ;
       \draw[->] (4.900923, 2) -- (6.274098, 0) ;
         
   \draw[domain=2.2:5.7, smooth, thick,variable=\x]  plot ( {\x},{exp(-(\x-4)*(\x-4)/(2*0.3))/sqrt(2*pi*0.3)+2.1}); 
  \draw[domain=3:7, smooth, thick,variable=\x]  plot ( {\x},{-exp(-(\x-5)*(\x-5)/(2*0.6))/sqrt(2*pi*0.6)-0.1}); 
   \filldraw (3.099077, 2) circle (1.5pt);
        \filldraw ( 3.539025, 2) circle (1.5pt);
          \filldraw( 3.712774, 2)  circle (1.5pt);
           \filldraw(3.861236, 2)circle (1.5pt);
              \filldraw(4.138764, 2) circle (1.5pt);
                \filldraw(4.287226, 2) circle (1.5pt);
              \filldraw (4.460975, 2) circle (1.5pt);
              \filldraw (4.900923, 2) circle (1.5pt);  
\end{tikzpicture}
\end{subfigure}
\end{center} 
\caption{The transport maps between two normal distributions described by the copulas $C^{\pm}_p$ (left) and $C^{\mp}_p$ (right) with $p=0.5$}
\label{fig:pmmp}
\end{figure}

\subsection{Optimality within a subset}

 To address the optimal transport problem   for ISS distortion functions, 
we  need a modification of universal optimality constrained within a subset of copulas.  	Let $\mathcal C' \subseteq \mathcal C$ and  $h\in \mathcal H$.
	The copula  $C  \in \mathcal C'$ is \emph{universally optimal  within $\mathcal C'$} for $h$ if
	for any $X,Y\in \mathcal X$ with copula $C$, 
	we have
 $$ 
	\EE^h [X + Y] \le \EE^h [X'+Y'] 
 $$for all $X',Y'\in \mathcal X$ with a copula in $\mathcal C'$  satisfying $X\laweq X'$ and $Y\laweq Y'$.

The next proposition illustrates  the symmetric roles of $C^\pm_p$ and $C^\mp_p$ in \eqref{copucbstar}-\eqref{copucbstar2} within the set $\mathcal C_p$.
Recall that $\tilde h$ is the dual of $h$, and we speak of maximality, instead of optimality,  for a copula maximizing the distorted cost. 
\begin{proposition}\label{prop:symmetric}
Let $h\in \mathcal H$. 
The copula $C^\pm_p$ is universally optimal (resp.~within   $\mathcal C_p$) for $h$ if and only if
the copula $C^\mp_{1-p}$ is universally maximal (resp.~within  $\mathcal C_{1-p}$) for $\tilde h$. 
\end{proposition}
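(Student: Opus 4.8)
The plan is to reduce the entire statement to two facts: the sign‑flip identity $\EE^h[X]=-\EE^{\tilde h}[-X]$ of Lemma~\ref{lem:2}, which trades a minimization of $\EE^h$ for a maximization of $\EE^{\tilde h}$, and the elementary observation that negating both margins of a random vector replaces its copula by the survival copula. First I would note that $\tilde{\tilde h}=h$ (immediate from $\tilde h(t)=1-h(1-t)$), so the two directions of the ``if and only if'' are genuinely symmetric and it suffices to prove one implication; I will argue that universal optimality of $C^\pm_p$ for $h$ implies universal maximality of $C^\mp_{1-p}$ for $\tilde h$, and invoke symmetry for the converse.

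The structural core is the identity $\widehat{C^\pm_p}=C^\mp_{1-p}$, where $\hat C(u,v)=u+v-1+C(1-u,1-v)$ denotes the survival copula. I would verify this by a direct case split on $u\vee v$ relative to $p$: when $u\vee v\ge p$ one has $(1-u)\wedge(1-v)\in[0,1-p]$, so $C^\pm_p(1-u,1-v)=1-(u\vee v)$ and hence $\widehat{C^\pm_p}(u,v)=u\wedge v$; when $u\vee v< p$ one gets $C^\pm_p(1-u,1-v)=(1-u-v)\vee(1-p)$ and hence $\widehat{C^\pm_p}(u,v)=(u+v-p)_+$. Comparing with \eqref{copucbstar2} at index $1-p$ shows these agree with $C^\mp_{1-p}$, the boundary $u\vee v=p$ being consistent since there $(u+v-p)_+=u\wedge v$. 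Since $C\mapsto\hat C$ is an involution, this also gives $\widehat{C^\mp_{1-p}}=C^\pm_p$.

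Then I would carry out the translation at the level of random variables, which sidesteps any copula non‑uniqueness. Given $(X^*,Y^*)$ with copula $C^\mp_{1-p}$, set $X=-X^*$ and $Y=-Y^*$; negating both coordinates produces the survival copula, so $(X,Y)$ admits $\widehat{C^\mp_{1-p}}=C^\pm_p$ as a copula. Because $X^*\laweq X'^*$ is equivalent to $-X^*\laweq -X'^*$, the admissible competitors $(X'^*,Y'^*)$ correspond bijectively, via negation, to the admissible competitors $(X',Y')=(-X'^*,-Y'^*)$ of $(X,Y)$. Applying Lemma~\ref{lem:2} to each side then converts the optimality inequality $\EE^h[X+Y]\le\EE^h[X'+Y']$ into $\EE^{\tilde h}[X^*+Y^*]\ge\EE^{\tilde h}[X'^*+Y'^*]$, which is exactly maximality of $C^\mp_{1-p}$ for $\tilde h$.

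For the constrained (``within $\mathcal C_p$'') version the only extra ingredient is that the survival map sends $\mathcal C_p$ onto $\mathcal C_{1-p}$: from $\hat C(p,p)=2p-1+C(1-p,1-p)$ one sees that $C(1-p,1-p)=1-p$ holds if and only if $\hat C(p,p)=p$, i.e.\ $C\in\mathcal C_p$ if and only if $\hat C\in\mathcal C_{1-p}$, and since $\hat{\hat C}=C$ this is a bijection $\mathcal C_p\leftrightarrow\mathcal C_{1-p}$. Hence restricting the competitors to copulas in $\mathcal C_{1-p}$ corresponds under negation exactly to restricting to $\mathcal C_p$, and the argument above goes through verbatim. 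The main obstacle is purely the bookkeeping in the case analysis establishing $\widehat{C^\pm_p}=C^\mp_{1-p}$; the rest is a clean bijective dictionary built from Lemma~\ref{lem:2} and the survival‑copula involution.
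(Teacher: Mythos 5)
Your proposal is correct and follows essentially the same route as the paper: negate both coordinates to swap $C^\mp_{1-p}$ for $C^\pm_p$ (the survival-copula identity $\widehat{C^\pm_p}=C^\mp_{1-p}$ that you verify by case analysis is exactly the fact the paper asserts when it says $(-X,-Y)$ has copula $C^\pm_p$), and then apply Lemma \ref{lem:2} to turn minimization of $\EE^h$ into maximization of $\EE^{\tilde h}$. The only difference is that you make explicit the survival-copula computation and the bijection $\mathcal C_p\leftrightarrow\mathcal C_{1-p}$, which the paper's proof leaves implicit.
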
 
\begin{proof}
We only show the ``only if" statement, and 
the ``if" statement  is symmetric. 
For a random vector
 $(X',Y')$, let  $(X,Y)$ have copula $C^\mp_{1-p}$ and the same marginal distributions as $(X',Y')$. Note that $(-X,-Y)$  has copula $C^\pm_{p}$. 
Optimality of $C^\pm_p$ implies 
$\EE^h[- X-Y] \le \EE^h [ -X'-Y']$. 
By using Lemma \ref{lem:2},
$$\EE^{\tilde h} [X+Y] =- \EE^h[-X-Y] \ge  -\EE^h [-X'-Y']
= \EE^{\tilde h}[X'+Y'],$$
and this gives   $\EE^{\tilde h} [X+Y] \ge \EE^{\tilde h}[X'+Y']$. 
Hence, $C^\mp_{1-p}$ is universally maximal for $\tilde h$.
Note that in the above argument, if $(X',Y')$ has a copula in  
$\mathcal C_{1-p}$, 
 then $(-X',-Y')$ has a copula in 
 $\mathcal C_{p}$, and hence the optimality statements within $\mathcal C_p$ and $\mathcal C_{1-p}$ hold true.
\end{proof}

In the next result, 
we show that $C^\pm_p$ is optimal within  $\mathcal C_p$ for an ISS distortion function  with an inflection point $p\in (0,1)$.  This, combined with  
Proposition \ref{prop:symmetric}, also shows that $ C^\mp_{p}$  is maximal within  $\mathcal C_p$ for such an ISS distortion function.
 The maximization problem is relevant  in the context of risk aggregation treated in Section \ref{sec:RA}. 
 
\begin{proposition}\label{th:1}
	Let  $h\in \cal H$ be an ISS distortion function with an inflection point $p\in (0,1)$. 
The copula $C^\pm_p$ is universally optimal  within $\mathcal C_p$ for $h$, and it is the unique optimizer if $h$ is strictly ISS. 
\end{proposition}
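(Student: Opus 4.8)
The plan is to exploit the rigid block structure that membership in $\mathcal C_p$ forces on a coupling, thereby splitting the constrained problem into two independent single-block transport problems on which the concave/convex pattern of $h$ can be applied directly. First I would record the structural consequence of the constraint $C(1-p,1-p)=1-p$. For $(U,V)\lawis C\in\mathcal C_p$, uniform marginals together with $\p(U\le 1-p,\,V\le 1-p)=1-p=\p(U\le 1-p)$ force $\{U\le 1-p\}=\{V\le 1-p\}$ almost surely. Writing $A=\{U\le 1-p\}$, so $\p(A)=1-p$, conditionally on $A$ both coordinates lie in $[0,1-p]$ and conditionally on $A^c$ both lie in $[1-p,1]$; moreover $U$ is uniform on $[0,1-p]$ given $A$, so the within-block marginals of $X\mid A,\ Y\mid A$ (and likewise on $A^c$) are \emph{fixed} across all $C\in\mathcal C_p$, while the two block couplings may be prescribed independently.

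Next I would split the objective additively. Set $c_0=F^{-1}_\mu(1-p)+F^{-1}_\nu(1-p)$ and $Z=X+Y$. Since $X,Y$ are increasing in $U,V$, on $A$ one has $Z\le c_0$ and on $A^c$ one has $Z\ge c_0$, so $Z\mid A$ occupies exactly the lowest $1-p$ of the quantile mass of $Z$ and $Z\mid A^c$ the top $p$. Writing $Z=(Z\wedge c_0)+(Z-c_0)_+$, the two summands are comonotonic, so by Lemma \ref{lem:1}, $\EE^h[Z]=\EE^h[Z\wedge c_0]+\EE^h[(Z-c_0)_+]$. The first term is constant ($=c_0$) on $A^c$ and hence varies only through the lower-block coupling, while the second vanishes on $A$ and varies only through the upper-block coupling; via \eqref{distor2} their coupling-dependent parts are $\int_0^{1-p}Q_s(Z)\,\d\tilde h(s)$ and $\int_{1-p}^1 Q_s(Z)\,\d\tilde h(s)$ respectively.

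I would then rescale each sub-integral to a genuine distorted expectation of $Z\mid A$ and $Z\mid A^c$. On $(0,1-p)$ the function $\tilde h$ is concave (because $h$ is convex on $(p,1)$), so the lower-block functional equals, up to a positive constant, $\EE^{g}[Z\mid A]$ with $g$ convex; by Lemma \ref{lem:1p} this is decreasing in convex order, and since the comonotonic coupling makes the sum largest in convex order (Corollary 3.28 of \cite{R13}), comonotonicity minimizes the lower-block term. Symmetrically, on $(1-p,1)$ the function $\tilde h$ is convex (because $h$ is concave on $(0,p)$), so the upper-block functional is a positive multiple of $\EE^{g'}[Z\mid A^c]$ with $g'$ concave, which is increasing in convex order, and counter-monotonicity makes the sum smallest in convex order, hence minimizes the upper-block term. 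Since $C^\pm_p$ is exactly comonotonic on the lower block and counter-monotonic on the upper block (see \eqref{eq:stochastic}), it minimizes both terms simultaneously and is therefore universally optimal within $\mathcal C_p$.

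For the uniqueness claim under strict ISS, the associated $g$ is strictly convex and $g'$ strictly concave, so Theorem \ref{th:unique}, applied blockwise, makes $C^+$ and $C^-$ the unique minimizers on the two blocks, forcing $C=C^\pm_p$. I expect the main obstacle to be the rescaling step: one must check that restricting and normalizing $\tilde h$ to a subinterval yields a bona fide distortion with the claimed (strict) convexity, and that the resulting functional's convex-order monotonicity lines up with the comonotone/counter-monotone extremes of the sum. Additional care is needed at the boundary value $c_0$ (atoms of $\mu,\nu$ at the $(1-p)$-quantile, and the event $\{U=1-p\}$) to guarantee that the clean quantile split $Z\mid A\le c_0\le Z\mid A^c$ is exact and that the two block contributions genuinely decouple.
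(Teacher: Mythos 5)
Your proposal is correct and follows essentially the same route as the paper's proof: both exploit the fact that the constraint $C(1-p,1-p)=1-p$ decouples the coupling into a lower block and an upper block, split the distorted expectation into the corresponding two quantile regions (the paper writes $\int_0^p Q_{1-t}(X+Y)\,\d h(t)+\int_p^1 Q_{1-t}(X+Y)\,\d h(t)$ directly, you reach the same split via comonotonic additivity of $Z\wedge c_0$ and $(Z-c_0)_+$ and the dual $\tilde h$), and then minimize each block separately using the concavity/convexity of $h$ on either side of $p$ together with the convex-order extremality of comonotonic and counter-monotonic sums, with uniqueness from Theorem \ref{th:unique} applied blockwise. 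Your extra care about the block structure and the rescaling to genuine distortion functions fills in details the paper leaves implicit, but does not change the argument.
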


\begin{proof}   
Let 
 $(X,Y)$ be a random vector with a copula in $\mathcal C_p$.  Using the quantile representation of $\EE^h$ in \eqref{distor2},  we have 
		\begin{align}\label{eq:sum-p}
		\EE^h [X+Y] 
 =\int_0^{p}  Q_{1-t}(X+Y)     \d h (t)   + \int_{p}^1 Q_{1-t}(X+Y)      \d h(t).
		\end{align}
		The key observation here is that for copulas in the set $\mathcal C_p$, the quantile of $X+Y$  at level $t\in [0,p)$ is solely determined by the copula of $(X,Y)$ on $[0,p]^2$, and  the quantile of $X+Y$  at level $t\in (p,1]$ is solely determined by  the copula of $(X,Y)$ on $[p,1]^2$.  
		Therefore, to minimize \eqref{eq:sum-p}, it suffices to consider the two regions separately. 	The term $\int_0^{p}  Q_{1-t}(X+Y)   \d h (t)$ is minimized by counter-monotonicity since $h$ is concave on $[0,p]$, and  $\int_{p}^1  Q_{1-t}(X+Y)   \d h (t)$ is minimized by comonotonicity since $h$ is convex on $[p,1]$.
	Hence, $C_p^\pm$ is universally optimal within   $\mathcal C_p$ for $h$.
	For a strictly ISS  $h$,  the uniqueness statement of the optimal copula follows from the corresponding uniqueness statements for comonotonicity and counter-monotonicity with strictly convex and strictly concave distortion functions in Theorem \ref{th:unique}.
 \end{proof}
  
We have seen that $C_p^\pm$ is universally optimal within $\mathcal C_p$ for an ISS function $h$ with inflection point $p$. 
One may naturally wonder whether $C^\pm_p$ is  universally optimal also within $\mathcal C$ for such an $h$. 
It turns out that this is not always the case, as shown by the following example.

\begin{example}
\label{ex:non-optimal}

We consider  the following ISS distortion function 
$$ h(t) =  \left\{ 
\begin{array}{ll} 
2t - 2t^2  ~~~~ & t \in [0,0.5]  \\ 1 - 2t + 2t^2~~~~ & t \in [0.5,1],
\end{array} 
\right. $$
where the inflection point $p = 0.5$. 
For standard uniform random variables $X,Y,X',Y'$
where $(X,Y)$ has the copula $C^\pm_{0.5}$ and $(X',Y')$ has the copula $C^*$ depicted in Figure \ref{fig1} (right panel), 
we can compute $\EE^h[X+Y]\approx 0.9167 >  \EE^h[X'+Y']\approx 0.9087$;  we omit the detailed computation. 
Therefore, $C^\pm_{0.5}$ is not universally optimal for $h$. 

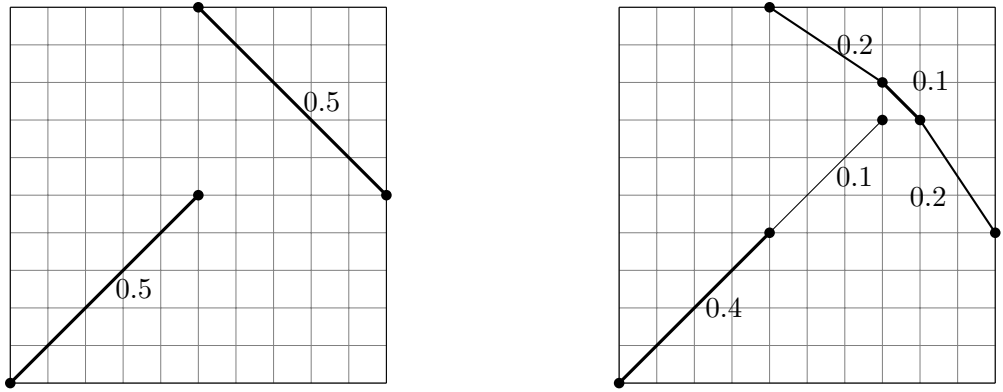
\begin{figure}
\begin{center}
\begin{subfigure}[b]{0.45\textwidth}
\centering
 \begin{tikzpicture}
\draw [step = 0.5,help lines] ( 0,0 ) grid (5,5);
\fill (0,0) circle (2pt);
\fill (2.5,2.5) circle (2pt);
\fill (2.5,5) circle (2pt);
\fill (5,2.5) circle (2pt);

\draw [ thin ] (0,0) -- (5,0);
\draw [ thin ] (0,0) -- (0,5);
\draw [ thin ] (5,0) -- (5,5);
\draw [ thin ] (0,5) -- (5,5);
\draw [very thick] (0,0) -- node[ right] {$0.5$} (2.5,2.5);
\draw [very thick] (2.5,5) -- node[ right] {$0.5$} (5,2.5);
\end{tikzpicture}
  \end{subfigure}
\hspace{1em}
\begin{subfigure}[b]{0.45\textwidth}
\centering
\begin{tikzpicture}
\draw [step = 0.5,help lines] ( 0,0 ) grid ( 5, 5);
\fill (3.5,4) circle (2pt);
\fill (4,3.5) circle (2pt);
\fill (2,5) circle (2pt);
\fill (5,2) circle (2pt);
\fill (2,2) circle (2pt);
\fill (3.5,3.5) circle (2pt);
\fill (0,0) circle (2pt);

\draw [ thin ] (0,0) -- (5,0);
\draw [ thin ] (0,0) -- (0,5);
\draw [ thin ] (5,0) -- (5,5);
\draw [ thin ] (0,5) -- (5,5);
\draw [very thick] (3.5,4) -- node[ above right] {$0.1$} (4,3.5) ;
\draw [thick] (2,5) -- node[ right] {$0.2$} (3.5,4);
\draw [thick] (4,3.5) -- node[below  left] {$0.2$} (5,2);
\draw [thin] (2,2) --  node[ right] {$0.1$} (3.5,3.5);
\draw [very thick] (0,0) --  node[ right] {$0.4$} (2,2);
\end{tikzpicture}
 \end{subfigure}
 \end{center}
 \caption{Supports of the copulas $C^\pm_{0.5}$ (left) and $C^*$ (right), where the numbers represent the corresponding weight of each line segment}\label{fig1}
\end{figure}

\end{example}

In what follows, we search for conditions on $h$ under which $C^\pm_p$ is universally optimal. 

\subsection{A characterization of universal optimality}

Next, we present the most technically advanced result in the paper, which establishes a necessary and sufficient condition on $h$ for the existence of a universally optimal copula, and identifies its form.
 In what follows, $h'(t^+)$ represents the right derivative of $h$ at $t\in [0,1)$, and similarly $h'(t^-)$ represents the left derivative at $t\in (0,1]$. For an ISS distortion function $h$, the left and right derivatives always exist.

The key condition on $h$ here is 
$h '(p^+)\ge h'(0^+)$, where $p$ is the inflection point of $h$. 
	This condition is rather restrictive. Since $h$ is ISS, this means that the slope of $h$ on $[0,p)$ is always smaller than or equal to the slope of $h$ on $(p,1]$; see Figure \ref{issnew}. 
	This condition imposes a kink at $p$, i.e., $h'(p^+)>h'(p^-)$, since the slope is decreasing on $[0,p)$ and increasing on $(p,1]$. Admittedly, such a condition is rarely observed for human behaviour as  extreme losses and gains are typically both weighted more than the middle part of the distribution (\cite{W10}).
	Nevertheless, we will see that this condition is necessary and sufficient for a universally optimal copula to exist if $h$ is strictly ISS.

\begin{figure}
\begin{center}
\begin{tikzpicture}
  \draw[->] (0, 0) -- (8.5, 0) node[below] {$t$};
  \draw[->] (0, 0) -- (0, 8.5) node[right] {$h(t)$};

    \draw[scale=8] (1,0) node [below]   {$1$};
     \draw[scale=8] (-0.02,0) node [below]   {$0$};
      \draw[scale=8] (0,1) node [left]   {$1$};
       \draw[scale=8] (0.85,0) node [below]   {$p$};
      
   \draw[scale=8, domain=0:0.85, smooth, thick,variable=\x]  plot ( {\x},{-70*\x*\x/81+126*\x/81});
    \draw[scale=8,  domain=0.85:1, smooth,thick,variable=\x]  plot ( {\x},{2200*(1-\x)*(1-\x)/729-1199*(1-\x)/486+1});

    \draw [blue,scale=8, domain=0:0.4023857, smooth,  variable=\x] plot ( {\x},{-70*\x*\x/81+126*\x/81});

                   \draw  [scale=8,blue, smooth] (0.4023857,   0.4860074)-- (1,1) ;
         
          \draw  [scale=8,red, smooth] (0,0)-- (0.85,  0.6978395) ;
        \draw [red,scale=8, domain=0.85:1, smooth,  variable=\x]  plot ( {\x},{2200*(1-\x)*(1-\x)/729-1199*(1-\x)/486+1});
        
 \draw  [scale=8,dash pattern={on 0.84pt off 2.51pt}]  (0,1) -- (1,1) ;
  \draw  [scale=8,dash pattern={on 0.84pt off 2.51pt}]  (1,0) -- (1,1) ;
    \draw  [scale=8,dash pattern={on 0.84pt off 2.51pt}]  (0.85,0) -- (0.85, 0.6978395)  ;

  \fill[scale=8] (0.85, 0.6978395) circle (0.25pt);
  \fill[scale=8] (0.4023857,   0.4860074) circle (0.25pt);
 
\end{tikzpicture}
\caption{A strictly ISS distortion function satisfying $h'(p^+)\ge h'(0^+)$ with its  convex envelope in red and  concave envelope in blue}
\label{issnew}
\end{center}
\end{figure}
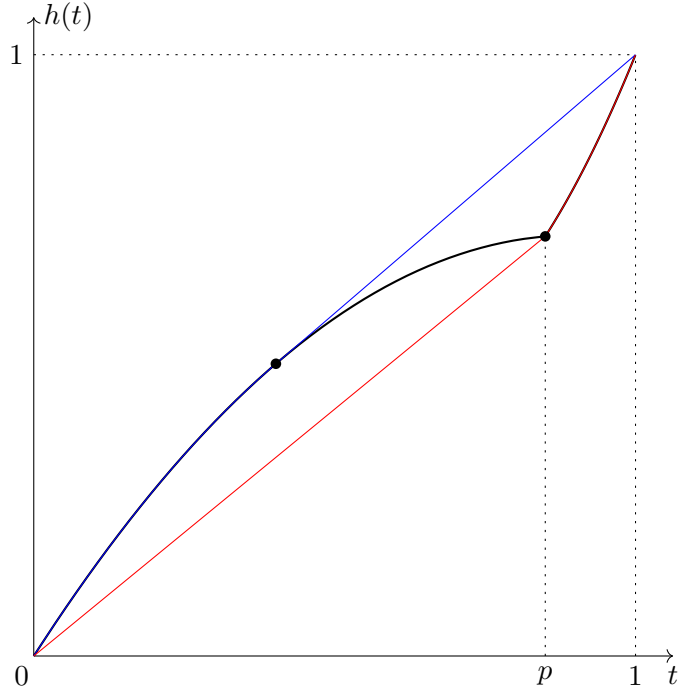

We first show  universal optimality of $C^\pm_p$ under the condition $h'(p^+)\ge h'(0^+)$,    a stronger optimality than 
the one obtained in Proposition \ref{th:1}. 
%
The proof of this result 
relies on some separate results on distorted optimal transport for conditional distributions. 
For this, let us introduce some notation. 
For any $Z\in \X$, let $F_Z$ be its distribution function, and take a uniformly distributed random variable $U_Z$ on $[0,1]$ 
such that $F_Z^{-1}(U_Z)=Z$ almost surely; the existence of $U_Z$ is guaranteed by, e.g., Lemma A.32 of \cite{FS16}.
Let $Z^\vee_p=F_Z^{-1}((1-p)+pU_Z)$; the random variable $Z^\vee_p$ is called the tail risk of $Z$ beyond its $(1-p)$-quantile by \cite{LW21}, and the distribution of $Z^\vee_p$ is called the $p$-tail distribution of $F$. 
Similarly, let  $Z^\wedge_p= F^{-1}_Z((1-p)U_Z)$, which represents the left tail risk.
Here, the superscripts $\vee$ and $\wedge$ reflect on the large and the small regions of the distribution. 
 With the notation above, we summarize some   results  needed for the proof of Theorem \ref{th:2} in the following two  lemmas.  
\begin{lemma}\label{lem:4-1}
Let  $h$ be an ISS distortion function with an inflection point $p\in (0,1)$ satisfying   $h
	'(p^+)\ge h'(0^+)$. 
  A decomposition of $\EE^h$ holds: for some $\theta,\theta_1,\theta_2\ge 0$ and convex $h_1,h_2\in \mathcal H$,
\begin{align}\label{eq:rwadd-2}
\EE^h [Z] = \theta \E[Z] - \theta_1 \EE^{h_1}[Z^\vee_p] +\theta_2 \EE^{h_2} [Z^\wedge_p], \mbox{~for all $Z\in \X$}.
\end{align}
  \end{lemma}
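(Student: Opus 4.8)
The plan is to reduce the claimed identity to an elementary matching of the quantile weights of the four functionals involved. Using the representation $\EE^h[Z]=\int_0^1 Q_u(Z)\,\d\tilde h(u)$ valid for continuous $h$ (stated just before Lemma~\ref{lem:2}), and the fact that $Q_u(Z)$ ranges over all bounded increasing functions as $Z$ ranges over $\X$, it suffices to match the Lebesgue--Stieltjes measures on $[0,1]$ generated by the two sides. I would first read off the weight of each term on the right: $\E[Z]=\int_0^1 Q_u(Z)\,\d u$ carries the constant density $1$; and since $Q_s(Z^\vee_p)=Q_{(1-p)+ps}(Z)$ and $Q_s(Z^\wedge_p)=Q_{(1-p)s}(Z)$, a change of variable shows that $\EE^{h_1}[Z^\vee_p]$ contributes a measure supported on $(1-p,1)$ with density $\tfrac1p\,\tilde h_1'(\tfrac{u-(1-p)}{p})$, and $\EE^{h_2}[Z^\wedge_p]$ a measure supported on $(0,1-p)$ with density $\tfrac{1}{1-p}\,\tilde h_2'(\tfrac{u}{1-p})$.

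Writing $w(u)=h'(1-u)$ for the density of $\d\tilde h$ (a one-sided derivative where needed), the whole lemma then splits into the two requirements
\begin{gather*}
w(u)=\theta+\frac{\theta_2}{1-p}\,\tilde h_2'\!\Big(\tfrac{u}{1-p}\Big),\qquad u\in(0,1-p),\\
w(u)=\theta-\frac{\theta_1}{p}\,\tilde h_1'\!\Big(\tfrac{u-(1-p)}{p}\Big),\qquad u\in(1-p,1).
\end{gather*}
Because $w(u)=h'(1-u)$ and $h$ is concave on $(0,p)$ and convex on $(p,1)$, the density $w$ is $U$-shaped: decreasing on $(0,1-p)$ with infimum $h'(p^+)$ (attained as $u\uparrow1-p$) and increasing on $(1-p,1)$ with supremum $h'(0^+)$ (attained as $u\uparrow1$). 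Since $h_i$ convex is equivalent to $\tilde h_i$ concave, i.e.\ $\tilde h_i'$ nonnegative and decreasing, the first display forces $\theta\le w(u)$ on $(0,1-p)$, i.e.\ $\theta\le h'(p^+)$, and the second forces $\theta\ge w(u)$ on $(1-p,1)$, i.e.\ $\theta\ge h'(0^+)$. Thus a feasible $\theta$ exists \emph{iff} $h'(0^+)\le\theta\le h'(p^+)$, an interval that is nonempty precisely under the hypothesis $h'(p^+)\ge h'(0^+)$; this is exactly where the assumption is used.

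Fixing any such $\theta$, I would define the residual densities $g_2(s)=w((1-p)s)-\theta$ and $g_1(s)=\theta-w((1-p)+ps)$ for $s\in(0,1)$. The $U$-shape of $w$ together with the range of $\theta$ makes both $g_1,g_2$ nonnegative and decreasing, so normalizing $\tilde h_2'=g_2/\int_0^1 g_2$ and $\tilde h_1'=g_1/\int_0^1 g_1$ produces concave duals $\tilde h_1,\tilde h_2\in\mathcal H$, equivalently convex $h_1,h_2\in\mathcal H$, while $\theta_2=(1-p)\int_0^1 g_2$ and $\theta_1=p\int_0^1 g_1$ are nonnegative. Consistency of the total mass is automatic: substituting the definitions, $\theta-\theta_1+\theta_2$ collapses to $\theta\cdot 0+\int_0^1 w(u)\,\d u=1=\tilde h(1)-\tilde h(0)$, so no further calibration of $\theta$ is required and both displays hold on all of $[0,1]$.

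The main obstacle I anticipate is not this algebra but the regularity bookkeeping. An ISS $h$ need not be differentiable---indeed the hypothesis $h'(p^+)\ge h'(0^+)$ together with concavity on $(0,p)$ forces a kink at $p$---so $w$ and the $\tilde h_i'$ must be read as one-sided derivatives, the density $w$ may jump at the boundary $1-p$ (harmless, since the two sub-intervals are matched separately) and may be unbounded near $0$ and $1$ (harmless, since $w$ stays integrable with $\int_0^1 w=1$). The delicate point is verifying that $g_1,g_2$ are genuinely nonnegative and decreasing, hence that $h_1,h_2$ are honest convex distortion functions and $\theta,\theta_1,\theta_2\ge 0$ (note $\theta\ge h'(0^+)\ge 0$); this is the single step in which both the ISS geometry and the inequality $h'(p^+)\ge h'(0^+)$ are simultaneously indispensable.
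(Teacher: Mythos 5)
Your argument is correct and is essentially the paper's own proof in a different notation: both hinge on choosing $\theta\in[h'(0^+),h'(p^+)]$ so that $h(t)-\theta t$ is decreasing on $[0,p]$ and increasing on $[p,1]$, splitting the quantile integral at the inflection point, and normalizing the monotone residuals into convex distortion functions applied to $Z^\vee_p$ and $Z^\wedge_p$. The only difference is presentational — you match densities of the quantile-weighting measures (which additionally shows the range of feasible $\theta$ is exactly $[h'(0^+),h'(p^+)]$), while the paper substitutes directly in the Stieltjes integrals — so no further comment is needed beyond handling the trivial degenerate case $\int_0^1 g_i=0$ by setting $\theta_i=0$.
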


\begin{proof} 
	Take $\theta\in [ h'(0^+), h'(p^+)]$. 
	Let $g(t)=h(t)-\theta t$ for $t\in [0,1]$. Note that $g$ is concave on $[0,p]$ and convex on $[p,1]$.
	By definition, we have the relations $Q_t(Z^\vee_p)=   Q_{(1-p)+pt} (Z)$ and  $Q_t(Z^\wedge_p)=   Q_{(1-p)t} (Z)$ for $t\in (0,1)$. Using these relations, we have
		\begin{align*} 
	\EE^h[Z]&  = \theta \E[Z]+  \int_0^{p} Q_{1-t}(Z) \d g (t)   + \int_{p}^1  Q_{1-t}(Z)  \d g (t)     
\\& = \theta \E[Z]+  \int_0^{p} Q_{1-t/p}(Z^\vee_p) \d g (t)   + \int_{p}^1  Q_{(1-t)/(1-p)}(Z^\wedge_p)  \d g (t)   
\\& = \theta \E[Z]+  \int_0^{1} Q_{1-s}(Z^\vee_p) \d g_1 (s )   + \int_{0}^1  Q_{1-s}(Z^\wedge_p)  \d g_2 (s)  ,
	\end{align*}
	where $g_1(s)=g(ps)= h(ps)-\theta ps $ and $g_2(s)= g(p+(1-p)s) = h(p+(1-p)s)-\theta (p+(1-p)s)$ for $s\in (0,1)$.
	Let $g_3(s)= -g_1(s)$ and $g_4(s)=g_2(s)-h(p)+\theta p$. Since $h'(t^+) \le h'(0^+)\le \theta$ for $t\in (0,p)$ 
	and  $h'(t^+) \ge  h'(p^+) \ge  \theta$ for $t\in (p,1)$, we know that  
	$g_3$ and $g_4$ are increasing. Moreover, it is easy to see that $g_3$ and $g_4$ are convex and $g_3(0)=g_4(0)=0$.
	 Therefore, 
		\begin{align*} 
 \int_0^{1} Q_{1-s}(Z^\vee_p) \d g_1 (s )   + \int_{0}^1  Q_{1-s}(Z^\wedge_p)  \d g_2 (s)  
 = - \int_0^{1} Q_{1-s}(Z^\vee_p) \d g_3 (s )   + \int_{0}^1  Q_{1-s}(Z^\wedge_p)  \d g_4 (s).
	\end{align*}
	Letting $\theta_1=g_3(1)$ and $\theta_2=g_4(1)$ yields \eqref{eq:rwadd-2}.
\end{proof}
\begin{lemma}\label{lem:4-2}
Let $g$ be a convex distortion function and  $\mu$ and $\nu $ be probability measures on $\R$.
The random vector $(X,Y)$ with copula $C^\pm_p$ solves 
 both the optimization   problems  
 \begin{align} 
&\mbox{to maximize~~}\EE^{g} [(X+Y)^\vee_p] \mbox{~~subject to $X\lawis \mu$ and $Y\lawis \nu$}, 
\label{eq:main-pf}
\\
\mbox{and}~~~&\mbox{to minimize~~}\EE^{g} [(X+Y)^\wedge_p] \mbox{~~subject to $X\lawis \mu$ and $Y\lawis \nu$}.
\label{eq:main-pf2}
    \end{align}   
 
\end{lemma}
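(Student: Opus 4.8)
The plan is to pass to quantiles and reduce each objective to a comparison of partial quantile integrals of $X+Y$, for which the workhorse is Lemma \ref{lem:1p}: for convex $g$ the functional $\EE^g$ is increasing in the concave order, equivalently it is antitone in the convex order, and more usefully $\EE^g[A]\le \EE^g[B]$ holds for every convex distortion $g$ if and only if $A\le_{\mathrm{icv}}B$ (increasing concave order), i.e. $\int_0^t Q_s(A)\,\d s\le\int_0^t Q_s(B)\,\d s$ for all $t$. Using the quantile identities $Q_t(Z^\wedge_p)=Q_{(1-p)t}(Z)$ and $Q_t(Z^\vee_p)=Q_{(1-p)+pt}(Z)$, the two objectives depend only on the \emph{lower} quantiles $Q_u(X+Y)$, $u\in[0,1-p]$, and the \emph{upper} quantiles $Q_u(X+Y)$, $u\in[1-p,1]$, respectively. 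The two halves then behave quite differently, and I would treat them separately.

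For the minimization \eqref{eq:main-pf2}, I would first observe, via the substitution $s=p+(1-p)t$ in \eqref{distor2}, that $\EE^g[(X+Y)^\wedge_p]=\EE^{h_1}[X+Y]$ where $h_1(s)=g\!\big(\tfrac{s-p}{1-p}\big)$ for $s\ge p$ and $h_1(s)=0$ for $s<p$; one checks directly that $h_1$ is a \emph{convex} distortion. By Theorem \ref{th:general} applied to the submodular increasing cost $c(x,y)=x+y$, the comonotonic copula $C^+$ is a global minimizer of $\EE^{h_1}[X+Y]$. Since $h_1$ vanishes on $[0,p]$, the value $\EE^{h_1}[X+Y]$ is a function of the lower $(1-p)$-quantiles alone; and because $C^\pm_p\in\mathcal C_p$ is comonotonic on the block $[0,1-p]^2$ (so that the bottom $(1-p)$-fraction of $X+Y$ comes exactly from that block and equals $F_\mu^{-1}(u)+F_\nu^{-1}(u)$, as under $C^+$), the copula $C^\pm_p$ produces the \emph{same} lower quantiles, hence the same minimal value. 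This gives optimality of $C^\pm_p$ in \eqref{eq:main-pf2} immediately.

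For the maximization \eqref{eq:main-pf} there is no such clean reduction, because the analogous distortion is convex on $[0,p]$ but flat on $[p,1]$ (it is not convex), and the unconstrained maximizer of the induced upper-quantile functional is $C^\pm_p$ itself rather than a Fréchet extreme. I would therefore recast the claim as: for all couplings, $(X+Y)^\vee_p\le_{\mathrm{icv}}(X^*+Y^*)^\vee_p$, where $(X^*,Y^*)\sim C^\pm_p$; by the icv-characterization above this is exactly what is needed for all convex $g$. The right-hand tail risk is, by the $\mathcal C_p$ structure, the \emph{counter-monotonic} coupling of the tail marginals $X^\vee_p$ and $Y^\vee_p$. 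Two facts are then in hand: (i) the mean endpoint holds, since $\E[(X+Y)^\vee_p]=\ES_{1-p}(X+Y)\le \ES_{1-p}(X)+\ES_{1-p}(Y)=\E[X^\vee_p]+\E[Y^\vee_p]$ by subadditivity of $\ES_{1-p}$, with equality attained by $C^\pm_p$ because it lies in $\mathcal C_p$; and (ii) among all couplings of the fixed tail marginals $X^\vee_p,Y^\vee_p$ (which all share this mean), the counter-monotonic one is largest in increasing concave order, since counter-monotonicity minimizes the convex order of a sum of two random variables and minimal convex order at fixed mean is maximal icv.

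The main obstacle is the bridge between (i)–(ii): showing that the tail risk $(X+Y)^\vee_p$ of an \emph{arbitrary} coupling — which is generally \emph{not} a coupling of $X^\vee_p$ and $Y^\vee_p$ — is icv-dominated by one that is. Concretely this is the family of inequalities
\begin{equation}
\int_{1-p}^{\alpha} Q_u(X+Y)\,\d u \;\le\; \int_{1-p}^{\alpha} Q_u(X^*+Y^*)\,\d u \qquad \text{for all } \alpha\in[1-p,1],
\end{equation}
and the difficulty is that maximizing the full tail integral ($\alpha=1$) pushes toward $\mathcal C_p$ via subadditivity of $\ES_{1-p}$, whereas shrinking the deep-tail integral $\int_\alpha^1 Q_u$ pushes toward the counter-monotone copula $C^-$; these two forces must be reconciled by a single coupling, namely $C^\pm_p$. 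I expect the cleanest route is to prove that the supremum in \eqref{eq:main-pf} is attained within $\mathcal C_p$ — reducing to the already-established within-$\mathcal C_p$ statement of Proposition \ref{th:1} (and its maximal counterpart) — either by a rearrangement argument that matches the upper-$p$ parts of $X$ and $Y$ without decreasing $\EE^g[(X+Y)^\vee_p]$, invoking the $\ES$-aggregation characterization of \cite{WZ21}, or by bounding each partial integral through the Rockafellar–Uryasev representation $\int_0^t Q_s(W)\,\d s=\max_c\{tc-\E[(c-W)_+]\}$ applied to the tail risk. This reduction step, rather than the quantile bookkeeping, is where the real work lies.
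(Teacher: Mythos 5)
Your treatment of the minimization problem \eqref{eq:main-pf2} is correct and complete: rewriting $\EE^g[(X+Y)^\wedge_p]=\EE^{h_1}[X+Y]$ with the convex distortion $h_1(s)=g\big(\tfrac{s-p}{1-p}\big)\id_{\{s\ge p\}}$, invoking Theorem \ref{th:general} to get $C^+$ as a global minimizer, and then noting that $C^\pm_p$ reproduces the lower $(1-p)$-quantiles of the comonotonic sum is a clean, self-contained argument. It is in fact somewhat more elementary than the paper's route, which handles this half (like the other) through the tail-risk-measure machinery of \cite{LW21}.

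The maximization half \eqref{eq:main-pf}, however, is not proved. You correctly reduce the claim to the increasing concave dominance $(X+Y)^\vee_p\le_{\mathrm{icv}}(X^*+Y^*)^\vee_p$ for $(X^*,Y^*)\lawis C^\pm_p$, and you verify the two boundary facts: equality of means via subadditivity of $\ES_{1-p}$, and icv-maximality of the counter-monotonic coupling among couplings of the fixed tail marginals $\mu_p,\nu_p$. But you then state explicitly that the bridge between them --- showing that the tail risk of an \emph{arbitrary} coupling, which need not be a coupling of $X^\vee_p$ with $Y^\vee_p$, is icv-dominated by one that is --- is ``where the real work lies,'' and you only list candidate strategies (rearrangement, the $\ES$-aggregation characterization, Rockafellar--Uryasev bounds on partial quantile integrals) without carrying any of them out. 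This missing bridge is precisely the content of Theorem 3 of \cite{LW21}, which the paper imports wholesale: for a $p$-tail risk measure such as $Z\mapsto\EE^g[Z^\vee_p]$, the supremum over all couplings of $(\mu,\nu)$ equals the supremum of the generating functional $\EE^g[X'+Y']$ over couplings of the tail distributions $(\mu_p,\nu_p)$; counter-monotonicity then maximizes the latter for convex $g$ (Proposition \ref{pr:trivial}), and $C^\pm_p$ realizes exactly that counter-monotonic tail coupling. Since this reduction is the central technical ingredient of the lemma rather than a routine verification --- it is a standalone theorem in a separate paper, and none of your proposed routes is executed --- the proposal has a genuine gap in the first half of the statement.
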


\begin{proof}
We first show the statement for the case of \eqref{eq:main-pf}. 
Note that the mapping $X\mapsto \EE^g[X_p^\vee]$ is a $p$-tail risk measure in the sense of \cite{LW21},
and we can use Theorem 3 of \cite{LW21}, which yields that the problem \eqref{eq:main-pf}
has the same maximum as
 \begin{align} \label{eq:main-pf3} \mbox{to maximize~~}\EE^{g} [X'+Y'] \mbox{~~subject to $X'\lawis \mu_p$ and $Y'\lawis \nu_p$}, 
\end{align}
 where $\mu_p$ is the $p$-tail distribution of $\mu$ and $\nu_p$ is the $p$-tail distribution of $\nu$.  
By Proposition \ref{pr:trivial}, \eqref{eq:main-pf3} is maximized  by counter-monotonicity of $X'$ and $Y'$.
By construction, $(X,Y)$ with copula $C^\pm_p$ and marginal distributions $\mu$ and $\nu$ satisfies 
$
\EE^g[(X+Y)^\vee_p] =   \EE^g[X'+Y']. 
$
Therefore, $(X,Y)$ with copula $C^\pm_p$ solves \eqref{eq:main-pf}. 

The proof for the case of \eqref{eq:main-pf2} is similar, by noting that comonotonicity minimizes $\EE^g $ of the sum of two random variables for a convex $g$ by Theorem \ref{th:general}.    
\end{proof}

The next result establishes that  $h
	'(p^+)\ge h'(0^+)$ is necessary and sufficient for the existence of a universally optimal copula when $h$ is strictly ISS. 

\begin{theorem}\label{th:2}
	Let  $h\in \cal H$ be an ISS distortion function with an inflection point $p\in (0,1)$ and $C$ be a copula.
Statement (i) implies statement (ii) below: 
	\begin{enumerate}[(i)]
	\item $C=C^\pm_p$ and $h
	'(p^+)\ge h'(0^+)$;
	\item   $C $  is universally optimal for $h$. 
	\end{enumerate}
  If $h$ is strictly ISS, then (i) and (ii) are equivalent. 
\end{theorem}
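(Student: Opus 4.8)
The plan is to treat the implication $(i)\Rightarrow(ii)$ and the converse under strict monotonicity separately, using the additive decomposition of Lemma \ref{lem:4-1} for the former and the pointwise necessary condition of Lemma \ref{lemma:1} for the latter.

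For $(i)\Rightarrow(ii)$, I would insert $Z=X+Y$ into \eqref{eq:rwadd-2}, which is available precisely because $h'(p^+)\ge h'(0^+)$, to obtain
\begin{align*}
\EE^h[X+Y]=\theta\,\E[X+Y]-\theta_1\,\EE^{h_1}[(X+Y)^\vee_p]+\theta_2\,\EE^{h_2}[(X+Y)^\wedge_p]
\end{align*}
with $\theta,\theta_1,\theta_2\ge0$ and $h_1,h_2$ convex. The term $\theta\,\E[X+Y]=\theta(\E[X]+\E[Y])$ is the same for every coupling of the fixed marginals, so it can be ignored. Since $\theta_1,\theta_2\ge0$, minimizing the right-hand side is equivalent to \emph{simultaneously} maximizing $\EE^{h_1}[(X+Y)^\vee_p]$ and minimizing $\EE^{h_2}[(X+Y)^\wedge_p]$. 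Lemma \ref{lem:4-2} asserts that the single copula $C^\pm_p$ realizes both optima at once; hence $C^\pm_p$ minimizes $\EE^h[X+Y]$ for every $\mu,\nu$, which is $(ii)$. When $h$ is strictly ISS one checks that $h_1,h_2$ may be taken strictly convex and $\theta_1,\theta_2>0$, so that the optimizers in Lemma \ref{lem:4-2} are unique (via the strict statements of Theorem \ref{th:unique}), giving uniqueness of $C^\pm_p$.

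For the converse, assume $h$ is strictly ISS and $C$ is universally optimal. By Lemma \ref{lemma:1}, $C(u,v)$ must minimize $K^h_{u,v}(t)=h(1-t)+h(1-u-v+t)$ over $[C^-(u,v),C^+(u,v)]$. Writing $a=1-t$ and $b=1-u-v+t$ (so $a+b$ is constant and $a\ge b$ on the feasible interval), the one-sided derivative is $h'(b)-h'(a)$, and strict ISS makes $h'$ strictly decreasing on $(0,p)$ and strictly increasing on $(p,1)$. Granting the slope condition $h'(p^+)\ge h'(0^+)$, I would then locate the minimizer region by region: on $\{u\wedge v\le 1-p\}$ one has $a\ge p$ throughout, so either $a,b\ge p$ or $b<p<a$, and in both cases $h'(a)\ge h'(b)$ (using the slope condition in the second), forcing the minimizer to the comonotonic endpoint $u\wedge v$; on $\{u\wedge v>1-p,\ u+v\ge 2-p\}$ both arguments stay in $(0,p)$ and strict concavity drives the minimizer to $u+v-1$; on the remaining intermediate region the minimizer sits exactly at $a=p$, i.e.\ at $t=1-p$. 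These three values are precisely $C^\pm_p$, and the strict inequalities give $C=C^\pm_p$.

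It remains to show the slope condition is forced, and this is where I expect the main difficulty. Supposing $h'(p^+)<h'(0^+)$, I would track the forced value $C(r,r)$ along the diagonal: for $r<1-p$ the midpoint $a=b=1-r>p$ lies in the convex region, whereas for $r=1-p+\delta$ the derivative $\tfrac{\d}{\d t}K^h_{r,r}$ at $t=(1-p)^-$ equals $h'(p-2\delta)-h'(p^+)$, which is strictly positive for a suitable $\delta$ (possible since $h'(0^+)>h'(p^+)$), so the minimizer is pushed strictly below $1-p$ toward the counter-monotonic endpoint. The hard part is turning such local sign computations into statements about the \emph{global} minimizer on all of $[C^-,C^+]$ and then arguing that the resulting forced function $(u,v)\mapsto C(u,v)$ cannot be $1$-Lipschitz and $2$-increasing, i.e.\ cannot be a copula at all; the behaviour is most delicate near the inflection point $p$, where $h'$ is flat (smooth case) or has a kink, so the comparison of $h'$ values on the two sides of $p$ must be handled carefully.
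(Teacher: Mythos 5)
Your proof of (i)$\Rightarrow$(ii) is exactly the paper's argument: decompose $\EE^h$ via Lemma \ref{lem:4-1} (available precisely when $h'(p^+)\ge h'(0^+)$) and invoke Lemma \ref{lem:4-2} to see that the single copula $C^\pm_p$ simultaneously optimizes the two tail terms; that part is correct.

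The converse, however, has a genuine structural gap. To conclude (i) you must establish \emph{both} $C=C^\pm_p$ and $h'(p^+)\ge h'(0^+)$, yet your region-by-region identification of $C$ on $\{u\wedge v\le 1-p\}$ already invokes the slope condition (you need $h'(a)\ge h'(b)$ in the case $b<p<a$, which is exactly $h'(p^+)\ge h'(0^+)$), while your separate derivation of the slope condition is left as an admitted sketch and is routed through the substantially harder claim that the forced pointwise minimizer of $K^h_{u,v}$ fails to be $2$-increasing or $1$-Lipschitz, i.e.\ is not a copula. As written, the argument is circular. The paper breaks the circle by reversing the order. First, $C=C^\pm_p$ is derived \emph{without} the slope condition: on the diagonal $u=v=r\le 1-p$, both arguments of $h$ in $K^h_{r,r}(t)=h(1-t)+h(1-2r+t)$ lie in $[p,1]$ for $t\in[(p+2r-1)_+,r]$, so strict convexity makes $K^h_{r,r}$ strictly decreasing there; hence $C(r,r)$ equals $r$ or lies below $p+2r-1<r$, and continuity of $r\mapsto C(r,r)$ (starting from $C(r,r)=r$ for $r\le(1-p)/2$, where the whole feasible interval is covered) excludes the second alternative. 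The off-diagonal values on $\{u\wedge v\le 1-p\}$ then follow from $C(u,v)\le C^+(u,v)$ and monotonicity of copulas, and the block $\{u\wedge v\ge 1-p\}$ from strict concavity on $(0,p)$ together with $2$-increasingness. Only \emph{after} $C=C^\pm_p$ is known does the paper extract the slope condition: since $C(1-p,1-\epsilon)=1-p$, Lemma \ref{lemma:1} gives $h(p)+h(\epsilon)=K^h_{1-p,1-\epsilon}(1-p)\le K^h_{1-p,1-\epsilon}(1-p-\epsilon)=h(p+\epsilon)+h(0)$, and dividing by $\epsilon$ and letting $\epsilon\to0^+$ yields $h'(p^+)\ge h'(0^+)$. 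To repair your proof, either establish the slope condition first by a self-contained argument (your sketch does not reach this), or redo the region $\{u\wedge v\le 1-p\}$ along the diagonal-plus-continuity lines above so that the slope condition is never assumed and can then be read off as in the paper's step (c).
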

\begin{proof}
We first show the (i)$\Rightarrow$(ii) implication.
For $X,Y\in \X$, by Lemma \ref{lem:4-1},
 \begin{align}
 \label{eq:main-pf5} 
 \EE^h[X+Y] = \theta \E[X+Y] - \theta_1 \EE^{h_1}[(X+Y)^\vee_p] +\theta_2 \EE^{h_2} [(X+Y)^\wedge_p],\end{align}
 where $\theta,\theta_1,\theta_2 \ge 0$ and $h_1,h_2\in \mathcal H$ are convex. 
By Lemma \ref{lem:4-2} and the fact that $\E[X+Y]$ does not depend on the copula of $(X,Y)$,  the value of \eqref{eq:main-pf5}  is minimized by $\C^\pm_p$ under the constraints $X\sim \mu$ and $Y\sim \nu$. 
Therefore, (ii) holds. 
  
Next, for a strictly ISS $h$, we  prove the (ii)$\Rightarrow$(i) implication in three steps.
	\begin{enumerate}[(a)]
			\item First, we determine   $C(u,v)$ in case  $ u\wedge v \in [ 0,1-p]$.  
		If $0<u=v\le  (1-p)/2 $, let $r=u$. Then, $p+2r-1<0$, and for $t\in(0,r)$, we have $1-t>p$, $1-2r+t>p$ and $1-2r+t<1-t$. Strict convexity of $h$ on the interval $(p,1)$ implies that $K^h_{r,r}(t)= h(1-t)+ h( 1- 2r + t )$ is strictly decreasing on the interval  $ [0,r ]$ and thus strictly decreasing on $ [(2r-1)_+,r ]$. Hence,  $K^h_{r,r}$ has the unique  minimum point  $ t = r $  on the interval  $ [(2r-1)_+, r]$.
		By Lemma \ref{lemma:1},   we have
		\begin{align*}
		C(r,r) &=r \quad\mbox{for all}\quad r\in( 0,(1-p)/2).
		\end{align*} 
		If $ (1-p)/2\le u=v  \le 1-p$, again let $r=u$. Then, $p+2r-1\ge0$, and for $t\in(p+2r-1,r)$, we have $1-t>p$ and  $1-2r+t>p$. Strict convexity of $h$ on the interval $(p,1)$ implies that $K^h_{r,r}(t)= h(1-t)+ h( 1- 2r + t )$ is strictly decreasing on the interval  $ [ p+2r-1,r ]\subseteq  [(2r-1)_+,r ]$,
		and hence the minimum point of $K^h_{r,r}$  on the interval  $ [(2r-1)_+,r ]$  cannot be in  $(p+2r-1,r)$.  Lemma \ref{lemma:1} yields either $C(r,r)=r$ or $C(r,r)\le p+2r-1$. By continuity of $C$,  this implies
		\begin{align*}
		C(r,r) &=r \quad\mbox{for all}\quad r\in((1-p)/2,1-p).
		\end{align*}

	If $ 0 \le u \le 1-p$ and $u\le v$, then $u=C(u,u)\le C(u,v) \le C^+(u,v)= u$, which yields 
	 $
		C(u,v) = u= u\wedge v.
 $
Similarly, $C(u,v) = v=u\wedge v$ if $ 0 \le v \le 1-p$ and $v\le u$. 
To summarize,
		\begin{align*}
		C(u,v)& = u\wedge v, \qquad \mbox{ if }u\wedge v \le 1-p.
		\end{align*}
		
%
%
%
		
	\item Next, we determine $C(u,v)$ in case $u,v\ge 1-p$.	 
	If   $u+v\le 2-p$, then $0\le 2-p-(u+v)$.  Moreover,  $u\wedge v \ge 1-p$ implies $p\ge2-p-(u+v)$ and 
	$(u+v)/2\le u+v+p-1$. Strict concavity of $h$ on the interval $[2-p-(u+v),p ] $ implies that $K^h_{u,v}$ is strictly increasing on the interval $[1-p, (u+v)/2 ] $. Thus, the unique minimum point  of $K^h_{u,v}$  on the interval  $ [(u+v-1)_+, u\wedge v ]$ cannot be in $(1-p,(u+v)/2]$.  By Lemma \ref{lemma:1},  $C(u,v)\le 1-p$. Noting that $C(u,v)\ge C(1-p,1-p)=1-p$, we have
	\begin{align}\label{eq:rw-added1} C(u,v)=1-p, \qquad\mbox{ if } u,v\in (1-p, 1) \mbox{ and } u+v\le 2-p.
	\end{align} 
If $u+v\ge 2-p$, then $v\ge 2-p-u$. Since $C$ induces a measure, which is nonnegative on $[u,1]\times [2-p-u,v]$, we have $C(1,v)-C(1,2-p-u)-C(u,v)+C(u,2-p-u)\ge0$. 
Using \eqref{eq:rw-added1}, this implies 
 $v-(2-p-u)+1-p=u+v-1\ge C(u,v)$. On the other hand, since $C\le C^{-}$, we get $C(u,v)=u+v+1$. 
		Therefore, putting the above two cases together, we conclude \begin{align}\label{copulap}
  C(u,v)=(u+v-1)\vee(1-p) \qquad\mbox{ if } u\wedge v \ge 1-p.\end{align}
	\item The points (a) and (b) above shows that the optimal copula is $C=C^\pm_p$. 
It remains to show $h
	'(p^+)\ge h'(0^+)$. 
Let  $   u= 1-p $ and $ v = 1-\epsilon$, where $\epsilon$ is a small positive number satisfying $\epsilon <  p\wedge(1-p)$. By \eqref{copulap}, we have $ C(1-p,1-\epsilon ) = 1-p $. By Lemma \ref{lemma:1}, the function $t\mapsto K^h_{1-p,1-\epsilon}(t) = h(1-t) +  h( p +\epsilon-1 + t )  $ has a minimum point $ t =1-p $ on the interval $[1-p-\epsilon,1-p ]$. Hence, 
	$$K^h_{1-p,1-\epsilon}(1-p) = h(p) + h( \epsilon ) \le K^h_{1-p,1-\epsilon}(1-p-\epsilon) = h(p+ \epsilon ) + h(0), $$
	which implies $$\frac{ h(p + \epsilon )-h( p )  }{\epsilon} \ge  \frac{  h(\epsilon)-h(0)}{\epsilon} .$$ Letting $\epsilon \rightarrow 0+ $ yields $ h'(p^+ ) \ge h'(0^+ )  $. 

	\end{enumerate} 
	The above three steps  complete the proof of the implication (ii)$\Rightarrow$(i).
\end{proof}

The condition  $h'(p^+)\ge h(0^+)$ is quite strong in Theorem 4. This result has two main implications  that are useful for the  optimal transport theory. 
\begin{enumerate}[(a)]
\item If  $h'(p^+)\ge h'(0+)$ holds, then we can explicitly solve the distorted optimal transport problem  regardless of the marginal distributions, just as in the case of comonotonicity in  the classic transport theory, which optimizes submodular costs   regardless of the marginal distributions. 
\item If $h'(p^+)\ge h'(0+)$ does not hold, then there is no universal solution to the distorted optimal transport problem; that is, different marginal conditions lead to different solutions, which can be difficult to obtain in general.  
\end{enumerate}
Point (a) can be used to find solutions for some optimal transport problems, and an example is illustrated in Section \ref{sec:R1}. Point  (b)  helps to clarify to which extent   beyond the usual framework, we can still hope for universal optimizers. As a general message from Theorem \ref{th:2},
without convexity or concavity of $h$, such general optimizers can exist only in very restrictive cases.

%
%
%

An example of $h$ that does not satisfy the condition $h'(p^+)\ge h'(0^+)$ is presented in Example \ref{ex:non-optimal}, where we have seen that $C^\pm_p$ is not universally optimal for $h$.
In that example, $h'(p^+)=0<h'(0^+)=2$.

If the condition $h'(p^+)\ge h'(0^+)$ is not satisfied, then $C^\pm_p$ is not  universally optimal for $h$. 
Nevertheless, $C^\pm_p$ may still be an optimizer for some specific choices of $\mu$ and $\nu$.  
For instance, if both $\mu$ and $\nu$ are uniform on $[0,1]$, and the convex envelope of $h$ is linear on $(0,p)$, then 
the copula $C^\pm_p$ minimizes $\EE^h[X+Y]$ for $X\lawis \nu$ and $Y\lawis \nu$, as shown by \citet[Theorem 4.3]{WXZ19}.  

\begin{remark}\label{rem:max}
If we consider the maximization problem instead of minimization in Theorem \ref{th:2}, then by Proposition \ref{prop:symmetric}, the corresponding condition is $h'(p^-)\ge h'(1^-)$ and the corresponding maximizer is $C^{\mp}_p$.
Based on this observation, the  minimal or maximal cost can be directly computed from the stochastic representation in \eqref{eq:stochastic}-\eqref{eq:stochastic2}.
\end{remark}

%

\begin{remark} 
For a proof of $C=C^\pm_p$ in the $\Rightarrow$ direction of  Theorem \ref{th:2}, we  rely on Lemma \ref{lemma:1}, which only involves mixtures of Bernoulli distributions (see Remark \ref{rem:lem3}).  
Therefore, to pin down  $C^\pm_p$ as the only possible form for an optimal copula, it suffices to require optimality only for Bernoulli marginal distributions.
\end{remark}

\section{Risk measures, S-shaped distortion, and risk aggregation}
\label{sec:RA}

 As mentioned in Section \ref{sec:2},
the framework of distorted optimal transport includes
  robust risk aggregation problems with distortion risk measures, and we only consider the case of aggregating two risks in this paper. 

Although belonging to the same class of distorted expectations, the motivation and applications of  distortion risk measures in risk management are quite different
from the distorted expectations used for modeling preferences in decision theory. 
There are two main differences.
First, the most widely used distortion risk measures are  those that are convex (i.e., with a concave $h$, such as ES) and those  that have a simple distortion function, such as the VaR and the Range-Value-at-Risk (RVaR) defined below.
Second,  maximum for risk measures corresponds the worst-case, where as 
maximum for dual utilities corresponds to the best-case; therefore, the interpretations of maximization and minimization are quite different for the two settings.  In particular, in robust risk aggregation, one often focuses on the maximum of the risk measure, instead of the minimum.

The two popular classes of distortion risk measures, VaR and RVaR, are defined below.
VaR can be defined either as a left-quantile or a right-quantile in the literature.
We define VaR at level $p\in(0,1)$ as the corresponding right-quantile, that is,
 $$\VaR_p(X) =\inf\{x\in \R: \p(X\le x)>p\}, \mbox{ for $X\in \X$},$$
and this choice is because
the right-quantile admits a maximizing copula (and the left-quantile $Q_p$ admits a minimizing copula); see 
 \citet[Lemma 4.2]{BJW14}.
For    levels $p,q\in (0,1)$ with $p<q$, RVaR at level $(p,q)$ is defined as
$$
  \RVaR_{p,q} (X)= \frac{1}{q-p}\int_p^q Q_r(X) \d r, \mbox{ for $X\in \X$}.
$$
The distortion function for $\VaR_p$ is given by $h(t)= \id_{\{ t\ge 1-p \}}$ for $t\in [0,1]$
and the distortion function for $\RVaR_{p,q}$ is given by $h(t)= (t\wedge (1-p)-(1-q))_+/(q-p)$ for $t\in [0,1]$. 
 Figure \ref{fig:VaR-RVaR} depicts  the distortion functions of $\VaR_p$ and $\RVaR_{p,q}$. 
The VaR has been a standard risk measure in banking and insurance since the 1990s (see  \cite{J06}), and
RVaR was introduced by \cite{CDS10} as a robust risk measure.  
Both VaR and ES are limiting cases of RVaR, and  RVaR can be expressed as the inf-convolution of VaR and ES as shown by \cite{ELW18}.

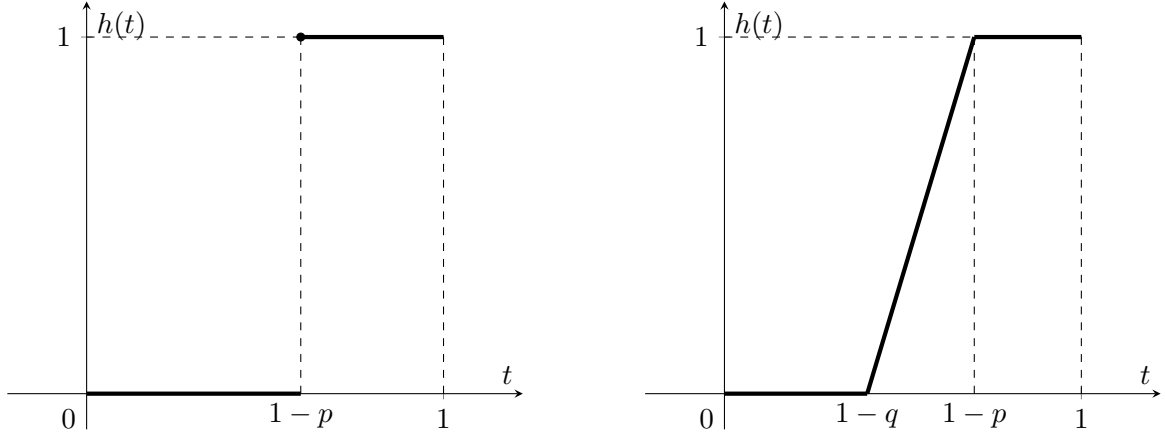
\begin{figure}[t]
\begin{center}
\begin{subfigure}[b]{0.45\textwidth}
\centering
\begin{tikzpicture}
\begin{axis}[
    axis lines = middle,
    xlabel = $t$,
    ylabel = {$h(t)$},
    domain=0:1,
    samples=100,
    ymin=-0.1,  
    ymax=1.1,
    xmin=0,  
    xmax=1,
    xtick={0,1},  
    ytick={0,1},  
    xticklabels={0,1},  
    yticklabels={0,1},  
    axis equal,  
]

 \draw  [ ultra thick] (axis cs:0,0) -- (axis cs:0.6,0) ;
  \draw  [ ultra thick] (axis cs:0.6,1) -- (axis cs:1,1) ;
\draw [dashed] (axis cs:0,1) -- (axis cs:1,1);
\draw [dashed] (axis cs:1,0) -- (axis cs:1,1);
\node [below] at (axis cs:-0.05,-0.015) {0};  
\draw [dashed] (axis cs:0.6,0) -- (axis cs:0.6,1);
\node [below] at (axis cs:0.6,0) {$1-p$}; 
 \filldraw (axis cs:0.6,1) circle (1.5pt);
 
\end{axis}
\end{tikzpicture}
\end{subfigure}
\hspace{2em}
\begin{subfigure}[b]{0.45\textwidth}
\centering
\begin{tikzpicture}
\begin{axis}[
     axis lines = middle,
    xlabel = $t$,
    ylabel = {$h(t)$},
    domain=0:1,
    samples=100,
    ymin=-0.1,  
    ymax=1.1,
    xmin=0,  
    xmax=1,
    xtick={0,1},  
    ytick={0,1},  
    xticklabels={0,1},  
    yticklabels={0,1},  
    axis equal,  %
]
 \draw  [ ultra thick] (axis cs:0.7,1) -- (axis cs:1,1) ;
  \draw  [ ultra thick] (axis cs:0,0) -- (axis cs:0.4,0) ;
  \draw  [ ultra thick] (axis cs:0.4,0) -- (axis cs:0.7,1) ;
\draw [dashed] (axis cs:0,1) -- (axis cs:1,1);
\draw [dashed] (axis cs:1,0) -- (axis cs:1,1);
\draw [dashed] (axis cs:0.7,0) -- (axis cs:0.7,1);
\node [below] at (axis cs:0.4,0) {$1-q$}; 
\node [below] at (axis cs:0.7,0) {$1-p$}; 
\node [below] at (axis cs:-0.05,-0.015) {0};  
\end{axis}
\end{tikzpicture}
\end{subfigure}
\end{center}
\caption{The distortion functions corresponding to $\VaR_p$ (left) and $\RVaR_{p,q}$ (right)}
\label{fig:VaR-RVaR}
  \end{figure}

An interesting observation from Figure \ref{fig:VaR-RVaR} is  RVaR has an S-shaped (instead of ISS) distortion function, and the distortion function of VaR is the limit of S-shaped ones.  
Since results for a concave $h$ are available from Theorems \ref{th:general} and \ref{th:unique} and Proposition \ref{pr:trivial},
below we will focus on the S-shaped distortion functions, and present a parallel result to Theorem \ref{th:2}.

\begin{definition}
	[{SS distortion function}] \label{def:ss}A distortion function $h\in \cal H$ is \emph{S-shaped} (SS) if 
	it is continuous and  there exists a point $p\in (0,1)$ (called an \emph{inflection point})  such that $h$ is convex on $(0,p)$ and  concave on $(p,1)$.
	It is \emph{strictly SS} if it is   strictly  convex on $(0,p)$ and strictly concave on $(p,1)$. 
\end{definition}

The following result characterizes universal optimality  for SS distortion functions. 
\begin{theorem}\label{pr:th2dual}
	Let  $h\in \cal H$ be an SS distortion function with an inflection point $p\in (0,1)$ and $C$ be a copula.
Statement (i) implies statement (ii) below: 
	\begin{enumerate}[(i)]
	\item $C=C^\pm_p$ and $h
	'(p^+)\le  h'(0^+)$;
	\item   $C $  is universally maximal for $h$. 
	\end{enumerate}
  If $h$ is strictly SS, then (i) and (ii) are equivalent. 
\end{theorem}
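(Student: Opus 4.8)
The plan is to mirror the proof of Theorem \ref{th:2}, using the fact that an SS distortion satisfying $h'(p^+)\le h'(0^+)$ behaves, for the maximization problem, exactly as an ISS distortion satisfying $h'(p^+)\ge h'(0^+)$ does for minimization. Two ingredients can be recycled almost verbatim. The first is Lemma \ref{lem:4-2}: it is stated for convex distortions and is indifferent to minimization versus maximization, since it asserts that $C^\pm_p$ \emph{simultaneously} maximizes the $\vee_p$-problem and minimizes the $\wedge_p$-problem. The second is a maximal version of Lemma \ref{lemma:1}, namely that if $C$ is universally maximal for $h$ then $C(u,v)$ is a \emph{maximizer} of $t\mapsto K^h_{u,v}(t)$ on $[C^-(u,v),C^+(u,v)]$; this follows by rerunning the Bernoulli-mixture argument of Lemma \ref{lemma:1} with the inequality reversed.

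The one genuinely new computation is the SS-analogue of the decomposition in Lemma \ref{lem:4-1}. Under $h'(p^+)\le h'(0^+)$ I would choose $\theta\in[h'(p^+),h'(0^+)]$ and set $g(t)=h(t)-\theta t$; then $g$ is convex and increasing on $(0,p)$ and concave and decreasing on $(p,1)$, the opposite curvature to the ISS case. Applying the same changes of variable $Q_{1-t}(Z)=Q_{1-t/p}(Z^\vee_p)$ and $Q_{1-t}(Z)=Q_{(1-t)/(1-p)}(Z^\wedge_p)$ yields
\begin{align*}
\EE^h[Z]=\theta\,\E[Z]+\theta_1\,\EE^{h_1}[Z^\vee_p]-\theta_2\,\EE^{h_2}[Z^\wedge_p],\qquad Z\in\X,
\end{align*}
for some $\theta,\theta_1,\theta_2\ge 0$ and \emph{convex} $h_1,h_2\in\mathcal H$; the sign flip on the $\wedge_p$-term relative to Lemma \ref{lem:4-1} comes precisely from $g$ being decreasing, rather than increasing, on $(p,1)$. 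Given this, (i)$\Rightarrow$(ii) is immediate and requires no strictness: $\E[Z]$ is copula-free, and by Lemma \ref{lem:4-2} the copula $C^\pm_p$ maximizes $\EE^{h_1}[(X+Y)^\vee_p]$ while minimizing $\EE^{h_2}[(X+Y)^\wedge_p]$, hence maximizes the entire right-hand side over all couplings with the prescribed marginals.

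For the converse under strict SS, I would carry out the three-step argument of Theorem \ref{th:2} with the roles of convexity/concavity and min/max interchanged, driven by the maximal version of Lemma \ref{lemma:1}. In the block $u\wedge v\le 1-p$ the relevant arguments of $K^h_{u,v}$ lie in $(p,1)$, where $h$ is strictly concave, so $K^h_{u,v}$ is strictly increasing and its maximizer sits at the comonotonic endpoint, forcing $C(u,v)=u\wedge v$. In the block $u,v\ge 1-p$ the arguments lie in $(0,p)$, where $h$ is strictly convex; the resulting strict monotonicity of $K^h_{u,v}$, together with the rectangle inequality for the copula-induced measure and the bound $C\ge C^-$, forces $C(u,v)=(u+v-1)\vee(1-p)$. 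These two blocks identify $C=C^\pm_p$. Finally, taking $u=1-p$ and $v=1-\epsilon$ gives $K^h_{1-p,1-\epsilon}(1-p)\ge K^h_{1-p,1-\epsilon}(1-p-\epsilon)$, i.e.\ $h(p)+h(\epsilon)\ge h(p+\epsilon)+h(0)$, and letting $\epsilon\to 0^+$ yields $h'(p^+)\le h'(0^+)$.

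I expect the main obstacle to be exactly the one present in Theorem \ref{th:2}: the case analysis in the converse when the two arguments of $K^h_{u,v}$ straddle the inflection point $p$, so that one lands in the strictly convex piece and the other in the strictly concave piece and the monotonicity of $K^h_{u,v}$ is no longer automatic. As in Theorem \ref{th:2}, I would resolve this by proving strict monotonicity only on the sub-interval where both arguments remain on one side of $p$, ruling out interior maximizers there, and then invoking the symmetry of $K^h_{u,v}$ about $(u+v)/2$ together with the $1$-Lipschitz continuity of $C$ and the values already pinned down on the adjacent blocks to exclude the remaining candidate.
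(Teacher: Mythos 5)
Your proposal is correct and is exactly the proof the paper intends: the paper omits the argument as ``symmetric to Theorem \ref{th:2}'', and your sign-flipped decomposition $\EE^h[Z]=\theta\E[Z]+\theta_1\EE^{h_1}[Z^\vee_p]-\theta_2\EE^{h_2}[Z^\wedge_p]$ (with $\theta\in[h'(p^+),h'(0^+)]$ and convex $h_1,h_2$), combined with Lemma \ref{lem:4-2} and the maximal version of Lemma \ref{lemma:1}, is precisely that symmetric rerun, including the correct handling of the straddling case and the kink condition in step (c). The only element you do not mention is the paper's shortcut for the bounded-derivative case, in which $g_\lambda(t)=(t-\lambda h(t))/(1-\lambda)$ is an ISS distortion with $g_\lambda'(p^+)\ge g_\lambda'(0^+)$ for small $\lambda>0$, so that Theorem \ref{th:2} applies directly; your direct argument is slightly more general since it requires no boundedness of $h'$.
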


We omit the proof of Theorem \ref{pr:th2dual} as it is symmetric to Theorem \ref{th:2}. 
Nevertheless, in case $h$ has a bounded derivative, we can directly obtain 
Theorem \ref{pr:th2dual} from Theorem \ref{th:2}, and we explain this argument below.
Suppose that $h$ satisfies  $h
	'(p^+)\le  h'(0^+)$.
For $\lambda\in (0,1)$, define  the distortion function
$g_\lambda$   by $
g_\lambda(t) =  (t -   \lambda h(t)) /(1-\lambda). 
$
The assumption that $h$ has a bounded derivative implies that $g_\lambda$ is increasing for $\lambda>0$ small enough.
Since $h$ is SS,  we have that $g_\lambda$ is an ISS distortion function,
with $g_\lambda
	'(p^+)\ge  g_\lambda'(0^+)$. 
By Theorem \ref{th:2},  the  distorted transport problem for $g_\lambda$ is  minimized by $C^\pm_p$. Since $\EE^{g_\lambda} =( \E - \lambda \EE^h)/(1-\lambda)$ and the term $\E$ does not depend on the transport plan,
we know that   the transport problem for $\EE^h$ is   maximized by $C^\pm_p$. 
 The uniqueness statement  in Theorem \ref{pr:th2dual} also follows from the same observation. 

Corresponding statements on minimizers can be obtained similarly as discussed in Remark \ref{rem:max}. 

For $p,q\in (0,1)$ with $p< q$, any point in $[1-q,1-p]$ is an inflection point of the distortion function $h$ of $\RVaR_{p,q}$.
Taking the inflection point $r=1-p$ of $h$,  
the condition  $h
	'(r^+)=0= h'(0^+)$ holds.
	Therefore, the condition in Theorem \ref{pr:th2dual} holds,
	and a universally maximal copula is $C^{\pm}_r$;
	thus, we obtain a maximizing  dependence structure in the  $\RVaR_{p,q}$ risk aggregation problem; a related result on this problem is Theorem 3 of \cite{LW21}. This copula is not unique since the distortion function of RVaR is not strictly SS; indeed, by definition, $\RVaR_{p,q}$ neglects  the region of the distribution beyond its $q$-quantile and below its $p$-quantile, leaving some flexibility of the optimal copula. 
	
	As the limiting case of $\RVaR_{p,q}$ as $p\uparrow q$, the $\VaR_q$ risk aggregation problem is also maximized by $\C^{\pm}_{1-q}$. This fact was shown by \cite{M81} and \cite{R82}.  
	As the other limiting case of $\RVaR_{p,q}$ as $q\to 1$, the $\ES_p$ risk aggregation problem is   maximized by $ C^+$, which is also implied by Theorem \ref{th:general}. All optimizers above are not unique; for instance, $\mathcal C_{1-p}$ is the set of all maximizers for $\ES_p$ (\cite{WZ21}).

\section{An application}
\label{sec:R1}

 In this section, we illustrate the results in Theorems \ref{th:2} and \ref{pr:th2dual} in an example of economic production. 
Suppose that  a manufacturer produces a certain type of products in a given period of time. Each individual product requires a worker (e.g., an engineer) and a co-worker (e.g., a designer) to collaborate. 
The company has an equal number of workers and co-workers, and needs to match each worker $x\in \mathfrak X$ with a co-worker $y\in \mathfrak Y$,
where $\mathfrak X$ and $\mathfrak Y$ are two arbitrary index sets.  
For simplicity, suppose that the quality of a product has the additive form $f(x)+g(y)$, where $f:\mathfrak X\to\R$ is the production function of worker $x$ and  $g:\mathfrak Y\to\R$ is the production function of  co-worker $y$. 
We assume measurability of $f$ and $g$, and that the workers $x$ and the co-workers $ y$ are modelled by two   distributions $\mu$ and $\nu$.
The company assesses that the 20\% products with the worst quality can only be sold at the minimum price,
and  the 30\% products with the best quality are more than good enough to be sold at a maximum price. 
For this reason,
the company's main interest is  
the average quality of their products between 20\% and 70\% quantiles.\footnote{We assume the number of workers and coworkers is very  large so that it does not hurt to use continuous distributions and precise probability levels.} 

The company's optimization problem is to maximize the range-average quality, with $p=0.2$ and $q=0.7$, 
\begin{equation}\label{eq:R1-app1}
\RVaR_{p,q}(f(X)+g(Y)) \mbox{~~~subject to $X\lawis  \mu$ and $Y\lawis  \nu$,}
\end{equation}
where $\RVaR_{p,q}$ is defined in Section \ref{sec:RA}.
The problem in \eqref{eq:R1-app1} can be equivalently formulated as 
\begin{equation}\label{eq:R1-app2}
\RVaR_{p,q}(Z+W) \mbox{~~~subject to $Z\lawis \tilde \mu$ and $W\lawis  \tilde \nu$,}
\end{equation}
where $\tilde \mu=\mu\circ f^{-1}$ and $\tilde \nu=\nu\circ g^{-1}$. 

To solve this problem, as discussed in Section \ref{sec:RA}, note that $\RVaR_{p,q}=\EE^h$, where $h$ is an SS-distortion function 
with inflection point $r=1-p=0.8$. 
Indeed, any point in $[1-q,1-p]$ is an inflection point (see Figure \ref{fig:VaR-RVaR}), 
but only $r=1-p$ satisfies the condition in Theorem  \ref{pr:th2dual}.
By using Theorem  \ref{pr:th2dual}, a maximizer to \eqref{eq:R1-app2} 
is   $C^{\pm}_{0.8}$. The interpretation of this transport plan (see Figure \ref{suppcp}) 
is that,  the worst $20\%$ workers 
and the worst $20\%$ co-workers are matched together following a comonotonic pattern  to produce the low-quality products, and the best $80\%$ workers 
and the best $80\%$ co-workers
are matched together following a counter-monotonic pattern, so that the medium-range products between 20\% and 70\% quantiles have good quality. 
Our universal optimality guarantees this solution regardless of the skill distributions $\tilde \mu$ and $\tilde \nu$.

Some numerical examples are  reported in Table \ref{tab:1} below, where we compute 
the value of $\RVaR_{p,q}(Z+W)$ in \eqref{eq:R1-app2} for five different couplings, and some continuous marginal distributions. Because we have explicit methods to simulate from each copula, we   calculate each number in Table \ref{tab:1} by   the average of the 20\% and 70\% sample quantiles from a Monte Carlo simulation of sample size 10,000,000.  From the results, we can see that the optimal copula leads to a  substantially larger range-average quality.
\begin{table}[ht]
    \centering\def\arraystretch{1.5}
    \begin{tabular}{ c|c|c } 
 \multirow{2}{*}{Coupling} &   \multicolumn{2}{c}{$ \tilde \mu$ and $\tilde \nu$}    \\
   &  $ \mathrm{Beta}(5,2)$, $ \mathrm{uniform}[0,1]$ &  $ \mathrm{Expo}(1)$, $\text{Log-normal}(0,1)$ \\   
        \hline        
        Comonotonicity   & 1.1585    & 1.5717  \\
                \hline
        Counter-monotonicity &   1.2017   &  2.0210  \\
        \hline
        Independence  &   1.1644 & 1.9123 \\
        \hline
            Sub-optimal copula $C^{\pm}_{0.5}$  &  1.2123 &   2.0304 \\
        \hline
       Optimal copula $C^{\pm}_{0.8}$   &1.2931  &  2.3824 \\
        \hline
    \end{tabular}
    \caption{The  range-average quality    $\RVaR_{0.2,0.7}(Z+W)$ where $Z\lawis \tilde \mu$ and $W\lawis \tilde \nu$}
    \label{tab:1}
\end{table}

\section{Concluding remarks}

\label{sec:conclusion}

The framework of non-linear and distorted optimal transport is proposed, and we obtain many results in this framework. 
As our main findings, we established universal optimality and duality for some classes of non-linear  expectations and cost functions,
and fully characterized universal optimality for ISS and SS distortion functions with linear cost.
The optimal copula is also illustrated with an application of economic production.
Since the framework is new, there are many open questions, and we discuss some of them below.

First, we have focused on transport on the real line, and the structure of the real line is used extensively.
For the applications that motivated the paper, behavioral decision making, robust optimization and robust risk aggregation, the real line is the most natural space, but for other applications, more general spaces are needed.  
To extend our results to $\R^d$ with $d\ge 2$ or  general Polish spaces requires thorough separate study.

Second, we established weak duality in Theorem \ref{th:dual} only in case $\EE$ is super-linear. 
The case of a general non-linear expectation is difficult to analyze, and we only have a simple bound \eqref{eq:WXZ} for $\EE=\EE^h$.
Similarly, strong duality is only established for convex $h$ and affine cost, and we wonder whether it also holds in more general situations. 

Third, universally optimal couplings are obtained under the assumption that the distortion functions  are convex, concave, ISS, or SS. 
Although these distortion functions are the most commonly used in the application domains, we wonder whether 
universally optimal couplings can be obtained, or proved to be non-existent, for more general forms of $h$. 
Moreover, optimal couplings for specific marginal distributions, other than all marginal distributions, are left for future research.

Fourth, most results  in our paper are obtained for distorted expectations or sub-linear/super-linear expectations. General non-linear expectations   that are neither sub-linear nor super-linear will require substantially different techniques.

Fifth, the distorted optimal transport can be generalized to other settings other than the standard one we considered. 
Motivated by different applications, classic transport problems have been generalized
to, among many others, the multi-marginal setting (see \cite{RR98,P15}), transport between vector-measures (see \cite{WZ22}),  transport between  capacities (see \cite{GSZ23}), and the constrained settings, such as those of martingale transport (\cite{BHP13,GHT14, BJ16}), supermartingale transport (\cite{NS18}) or directional transport (\cite{NW22}). Each of these generalizations can be combined with our framework of distorted transport. 
We expect them to lead to great additional   challenges and new mathematics on optimal transport in the future.

\subsection*{Acknowledgements}
RW acknowledge financial support from Canada Research Chairs (CRC-2022-00141)
and the
Natural Sciences and Engineering Research Council of Canada (RGPIN-2018-03823, RGPAS-2018-522590).
 

\end{document}